\documentclass[12pt]{article}
\usepackage{amsmath,mathtools,amssymb,amsfonts,amsthm,mathrsfs,
url,xpatch,hyphenat}
\usepackage[shortlabels]{enumitem}

\usepackage[dvipsnames]{xcolor}        
\usepackage{tikz-cd}

\newcommand{\TE}{\mathsf{TE}}

\newcommand{\calGrEs}{\mathscr{G}^r_{\E^*}}

\newcommand{\A}{\mathsf{A}}
\newcommand{\B}{\mathsf{B}}
\newcommand{\TM}{\mathsf{TM}}
\newcommand{\TQ}{\mathsf{TQ}}
\newcommand{\TsM}{\T^*\M}
\newcommand{\M}{\mathsf{M}}
\newcommand{\Q}{\mathsf{Q}}
\newcommand{\F}{\mathsf{F}}
\newcommand{\E}{\mathsf{E}}
\newcommand{\R}{\mathsf{R}}

\newcommand{\T}{\mathsf{T}}
\newcommand\reg{$^{\textsf{\tiny\textregistered}}$}

\newcommand{\rd}{\xrightarrowt{\delta}}
\newcommand{\D}{\mathsf{D}}
\newcommand{\Xnh}{X^{\mathrm{nh}}_\D}

\newcommand\Xlin{X^{\mathrm{lin}}}
\newcommand\Xaff{X^{\mathrm{aff}}}
\newcommand{\nx}{\nabla_{X_0}}

\newcommand{\C}{\mathrm{C}}
\newcommand{\HE}{\mathsf{HE}}
\newcommand{\VE}{\mathsf{VE}}
\newcommand{\G}{\mathbb{G}}

\newcommand{\V}{\mathsf{V}}

\newcommand{\scrL}{\mathscr{L}}
\newcommand{\scrLX}{\scrL_{X_0}}
\newcommand{\scrLXk}{\scrL^k_{X_0}}
\newcommand{\scrLXh}{\scrL_{{X_0^\mathrm{h}}}}
\newcommand\scrLAe{\scrL_{{A^\mathrm{e}}}}

\newcommand{\scrLbv}{\scrL_{b^\mathrm{v}}}

\newcommand{\scrLXaff}{\scrL_{X^{\mathrm{aff}}}}

\newcommand{\bbR}{\mathbb{R}}

\newcommand{\calG}{\mathcal{G}}
\newcommand\bbT{\mathbb{T}}
\newcommand\bbI{\mathbb{I}}
\newcommand\bbG{\mathbb{G}}
\newcommand\sS{\mathsf{S}}
\newcommand\what[1]{\widehat{#1}\null}
\DeclareMathOperator\Span{span}
\DeclareMathOperator\End{End}
\DeclareMathOperator\Fl{Fl}
\DeclareMathOperator\hlft{hl}
\DeclareMathOperator\vlft{vl}

\newcommand\dd{\mathrm{d}}
\newcommand\ddt{\frac{\dd}{\dd t}}
\newcommand\calC{\mathcal{C}}
\DeclareMathOperator\image{image}

\newcommand\nabd{\stackrel{\mathclap{\scriptscriptstyle{\D}}}{\nabla}}
\newcommand\nabdp{\stackrel{\mathclap{\scriptscriptstyle{\D^\perp}}}{\nabla}}
\newcommand\nabg{\stackrel{\mathclap{\scriptscriptstyle{\bbG}}}{\nabla}}
\newcommand\nablaD{\stackrel{\mathclap{\scriptscriptstyle{\D^\perp}}}{\nabla^*}}
\newcommand\nablaDns{\stackrel{\mathclap{\scriptscriptstyle{\D^\perp}}}{\nabla}}

\newlength{\arrow}
\settowidth{\arrow}{\scriptsize$eeee$}
\newcommand*{\xrightarrowt}[1]{\xrightarrow{\mathmakebox[\arrow]{#1}}}

\makeatletter
\newcommand{\xdashrightarrow}[2][]{\ext@arrow 0359\rightarrowfill@@{#1}{#2}}
\def\rightarrowfill@@{\arrowfill@@\relax\relbar\rightarrow}
\def\arrowfill@@#1#2#3#4{%
	$\m@th\thickmuskip0mu\medmuskip\thickmuskip\thinmuskip\thickmuskip
	\relax#4#1
	\xleaders\hbox{$#4#2$}\hfill
	#3$%
}
\makeatother

\theoremstyle{plain} 
{\swapnumbers
\newtheorem{theorem}{Theorem}[section]
\newtheorem{lemma}[theorem]{Lemma}

\newtheorem{proposition}[theorem]{Proposition}
}
\theoremstyle{definition}
{\swapnumbers
\newtheorem{definition}[theorem]{Definition}

\newtheorem{remark}[theorem]{Remark}
\newtheorem{paragr}[theorem]{}
}

\usepackage[margin=1in]{geometry}
\usepackage[final,
colorlinks=true,
linkcolor=RedViolet,
citecolor=RedViolet, 
urlcolor=RoyalBlue]{hyperref} 

\author{Andrew D. Lewis \and  Ahmed Gamal Shaltut}
\title{ Invariance of subbundles and nonholonomic
trajectories}
\date{}
\sloppy

\usepackage[backend=biber,
style=alphabetic,
autocite=inline
]{biblatex}
\addbibresource{ref.bib}

\newcommand\rtxt[1]{{#1}}

\begin{document}
\maketitle

\begin{abstract}
In the comparison of nonholonomic mechanics and constrained variational
mechanics, invariant affine subbundles arise in the determination of the
initial conditions where the two methods yield the same trajectories.
Motivated by this, differential conditions are considered for invariant
affine subbundles as they arise in the comparison of nonholonomic and
constrained variational mechanics.  First of all, the formal integrability of
the resulting linear partial differential equation is determined using
Spencer cohomology.  Second, iterative formulae are provided that permit the
determination of the largest invariant affine subbundle invariant under an
affine vector field.  Finally, the problem of a disc rolling on an inclined
plane with no-slip is considered as an example to illustrate the theory.
\begin{flushleft}
\textbf{AMS subject classifications (2020).} 35A01, 53B05, 58J90, 70F25,
70G45, 70G75.
\end{flushleft}
\end{abstract}

\section{Introduction} 

We warmly dedicate this paper to the memory of Professor Miguel
Mu\~noz-Lecanda.  Miguel was very kind to the first author early in his
career; his friendship was greatly valued.  The subject of this paper,
nonholonomic mechanics, was one in which Professor Mu\~noz-Lecanda made a
number of
contributions,~e.g.,~\autocite{MdL/ML/MCM-L:21,XG/JM-S/MCM-L:03,MCM-L:18}\@.
Unlike mechanical systems with holonomic constraints, constrained variational
trajectories for a system with nonholonomic constraints are not necessarily
the ones obtained from nonholonomic mechanics.  While the former has the
appeal of a variational framework, the latter capture the physics
correctly~\autocite{ADL:17b}\@. The case when constrained variational
trajectories are nonholonomic trajectories has been considered extensively in
the literature, for example in \autocite{OEF/AMB:08}, \autocite{MJ/WR:18},
\autocite{ADL:20a}, and \autocite{ADL/RMM:95a}.  As is well-known, the two
approaches are equivalent\textemdash{}in the sense that they \emph{always}
yield the same physical trajectories\textemdash{}if and only if the
constraint is holonomic.  The interesting question, and the one we are
concerned with in this paper, is when \emph{some} nonholonomic trajectories
arise from \emph{some} constrained variational trajectory.  For a review of
the history of the subject, see \autocite{MdL:12}.

In the recent exposition of Lewis~\autocite{ADL:20a} using an affine
connection formulation for the equations of the extermals, the two sets of
trajectories were compared.  Although restricted to kinetic energy minus
potential energy Lagrangians, previous results for the classification of
regular constrained variational trajectories as nonholonomic were recovered
and, for the first time, a classification of singular constrained variational
trajectories as nonholonomic trajectories was presented.  \rtxt{For a
	detailed discussion of how these results relate to existing work, including
	that cited above, we refer the reader to the discussion in \S7.3 of
	\autocite{ADL:20a}\@.}

The conditions arrived at in~\autocite{ADL:20a} are algebraic and
differential in nature and are for the existence of a flow-invariant affine
subbundle variety contained in a cogeneralized subbundle for the regular case
and of a flow-invariant cogeneralized subbundle contained in a cogeneralized
subbundle in the singular case (see Sections~\ref{subsec:cogen}
and~\ref{subsec:affvars} for the definitions).  Our objectives are to explore
these conditions more deeply, and to understand how to apply them by
considering a fairly simple, but yet illustrative, example.

The following is an outline of the paper.
\begin{enumerate}[1.,nosep]
	\item We fix our notation and conventions in Section~\ref{sec:prelim}.  In
	this section, we also review the relevant definitions and results
	from~\autocite{ADL:20a}\@.
	\item In Section~\ref{sec:mechanics} we present the equations governing
	nonholonomic and constrained variational trajectories, following the
	refinements of~\autocite{ADL:20a} of the initial work
	in~\autocite{IK/WMO:01}\@.  In this section we also indicate how the notions
	of invariant subbundles from Sections~\ref{subsec:cogen}
	and~\ref{subsec:affvars} arise in the comparison of nonholonomic and
	constrained variational trajectories.
	\item In Section~\ref{sec:invariance} we explore two aspects of the results
	of~\autocite{ADL:20a}\@: (a)~the formal integrability of differential
	conditions for invariance; (b)~the determination of computable infinitesimal
	characterisations of invariance.
	\item We provide a complete geometric formulation of the problem of a disc
	rolling with no-slip over an inclined plane as suggested by \autocite{NAL:22}
	in Section~\ref{sec:example}.  For this example, we carry out a detailed
	analysis of this example \emph{vis-\`a-vis} the question of comparing
	nonholonomic and constrained variational mechanics.  As we shall see, our
	results allow for a \emph{complete} characterization of the existence of
	invariant affine subbundle varieties, something which is typically not done
	due to implicit assumptions about the absence of singularities.
\end{enumerate}   

\section{Preliminaries}\label{sec:prelim}  

In this section we review some elementary geometric constructions for the
purpose of fixing notation, and we review the relevant definitions and
results concerning subbundles and their invariance under flows of certain
sorts of vector fields. \rtxt{For reasons of space, we present these results
	without the proofs and refer the reader to \autocite{ADL:20a} for the
	complete exposition and the detailed proofs.}

\subsection{Notation and elementary constructions}

To treat both smooth and real-analytic regularities, we let
$r\in\{\infty,\omega\}$.
\begin{paragr}
	Given a linear connection $\nabla$ on a vector bundle $\pi\colon\E\to\M$, the
	tangent bundle $\TE$ can be decomposed into horizontal and vertical
	bundles. These are denoted by $\HE$ and $\VE$, respectively
	\autocite[\S11.10]{kms}.  Recall that the
	horizontal lift isomorphism is given by the map $\hlft\colon\pi^*\TM\to\HE$
	such that, for $e\in \E$,
	\begin{equation*}
		T\pi(\hlft(e,v))=v,\quad v\in\mathsf{T}_{\pi(e)}\M.
	\end{equation*}
	The vertical lift isomorphism is the map $\vlft:\E\oplus\E\to\VE$ such
	that, for $e\in\E$,
	\begin{equation*}
		\vlft(e,f)=\left.\ddt\right|_{t=0}(e+tf),\quad f\in\E_{\pi(e)}.
	\end{equation*}
	Let $X_0\in\Gamma^r(\TM)$.  The horizontal lift of $X_0$ is the vector field
	$X_0^{\mathrm{h}}\in\Gamma^r(\TE)$ defined by
	\begin{equation*}
		X_0^{\mathrm{h}}(e)=\hlft(e,X_0(\pi(e))),\quad e\in\E.
	\end{equation*}
	The vertical evaluation of $A\in\Gamma^r(\mathrm{End}(\E))$ is the vector
	field $A^{\mathrm{e}}\in\Gamma^r(\TE)$ defined by
	\begin{equation*}
		A^{\mathrm{e}}(e) = \vlft(e,A(\pi(e))(e)),\quad e\in\E.
	\end{equation*}
	The horizontal lift of $f\in\C^r(\M)$ is the pullback using $\pi$, that is,
	$f^{\mathrm{h}}=\pi^*f\in\C^r(\E)$.  The vertical evaluation of
	$\lambda\in\Gamma^r(\E^*)$ is the function $\lambda^{\mathrm{e}}\in\C^r(\E)$
	defined by
	\begin{equation*}
		\lambda^{\mathrm{e}}(e)=\langle\lambda(\pi(e)),e\rangle,\quad e\in\E.
	\end{equation*}
	\rtxt{The vertical lift of any section $b\in \Gamma^r(\E)$ is defined by
		\begin{equation*}
			b^{\mathrm{v}} (e) = \vlft(e, b(\pi(e))),\quad e\in \E.
	\end{equation*}}
\end{paragr}

\begin{paragr}
	A vector field $X\in\Gamma^r(\TE)$ is called a linear (resp.\ affine) vector
	field over $X_0$ if it is a $\C^r$-vector (resp.\ affine) bundle map over
	$X_0$ such that the following diagram commutes
	\begin{equation*}
		\begin{tikzcd}
			\E\arrow{r}{X}\arrow[swap]{d}{\pi}&\TE\arrow{d}{T\pi}\\
			\M\arrow{r}[swap]{X_0}&\TM
		\end{tikzcd}
	\end{equation*} 
	If $X\in\Gamma^r(\TE)$ is an affine vector field over $X_0$, then, using the
	linear connection $\nabla$ in $\E$, it can be decomposed as
	\begin{equation*}
		X=X_0^{\mathrm{h}}+A^{\mathrm{e}}+b^{\mathrm{v}},
	\end{equation*}
	where $A\in\Gamma^r(\mathrm{End}(\E))$, and $b\in\Gamma^r(\E)$. 
	The case
	where $b$ is the zero section corresponds to linear vector fields. It is easy
	to see that a smooth curve $\eta\colon I\to \E$ is an integral curve of
	$X_0^{\mathrm{h}}+A^{\mathrm{e}}+b^{\mathrm{v}}$ if and only if
	\begin{equation}
		\nabla_{\gamma'}\eta = A\circ\eta + b\circ\gamma,
	\end{equation}
	where $\gamma=\pi\circ\eta$. 
\end{paragr}

\begin{paragr}
	The flow of a vector field $Y\in\Gamma^r(\TE)$ at $e\in\E$ is denoted by
	$\Fl^Y_t(e)$.  The dual vector field of $Y$ on the dual bundle $\E^*$, is
	the vector field $Y^*$ on $\E^*$ defined by
	\begin{equation*}
		Y^*(\lambda)=\left.\ddt\right|_{t=0}(\Fl^Y_{-t})^*(\lambda),\quad \lambda\in\E^*.
	\end{equation*}
	If $Y$ is a linear vector field over $X_0$ then $Y^*$ is linear vector field
	over $X_0$. Moreover, if $Y=X_0^{\mathrm{h}}+A^{\mathrm{e}}$, then
	$Y^*=X_0^{\mathrm{h},*} - (A^*)^{\mathrm{e}}$, where $X_0^{\mathrm{h},*} $ is
	the horizontal lift of $X_0$ corresponding to the dual connection on $\E^*$.
\end{paragr}

We need a few elementary results regarding Lie differentiation. We record
them in the following elementary lemma. See \autocite{ADL:20a} for the proof.
\begin{lemma}\label{lem:itrLieDeriv}
	Let $X_0\in\Gamma^r(\TM)$, let $A\in\Gamma^r(\mathrm{End}(\E))$, and let
	$b\in\Gamma^r(\E)$.  Then, for $k\in\mathbb{Z}_{>0}$, for $U\subseteq\M$
	open, for $f\in\C^r(U)$, and for any local section $\lambda$ of $\E^*$ over
	$U$, the following holds:
	\begin{enumerate}[(i),itemsep=0.1em]
		\item $\scrLXh^k f^{\mathrm{h}} = (\scrLXk f)^{\mathrm{h}}$;
		\item \label{item:Xafffh}
		$\scrLAe^kf^{\mathrm{h}}= \scrLbv^kf^{\mathrm{h}}=0$;
		\item \label{item:Xhlambda}
		$\scrLXh^k\lambda^{\mathrm{e}}=\left(\nx^k\lambda\right)^{\mathrm{e}}$;
		\item \label{item:Aelambda} $\scrLAe^k\lambda^{\mathrm{e}}=\left((A^*)^k\lambda\right)^{\mathrm{e}}$;
		\item \label{item:bvlambda}
		$\scrLbv\lambda^{\mathrm{e}}=\langle\lambda,b\rangle^{\mathrm{h}}$.
	\end{enumerate}
\end{lemma}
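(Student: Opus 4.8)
The plan is to reduce everything to pointwise directional derivatives and then pass to arbitrary order by induction on $k$. Each of $f^{\mathrm{h}}$ and $\lambda^{\mathrm{e}}$ is a \emph{function} on $\E$, so for any $Y\in\Gamma^r(\TE)$ one has $\scrL_Y g=Y(g)$, and it suffices to evaluate the vector fields $X_0^{\mathrm{h}}$, $A^{\mathrm{e}}$, and $b^{\mathrm{v}}$ on these two functions. I would carry this out at a fixed $e\in\E$ using curves adapted to each lift: for $X_0^{\mathrm{h}}(e)$ take the curve $t\mapsto\eta(t)$ obtained by parallel transporting $e$ along the integral curve $\gamma$ of $X_0$ through $\pi(e)$, so that $\eta(0)=e$, $\nabla_{\gamma'}\eta=0$, and $\eta'(0)=X_0^{\mathrm{h}}(e)$ (the velocity of a parallel section is horizontal and projects to $X_0(\pi(e))$); for the vertical fields $A^{\mathrm{e}}(e)$ and $b^{\mathrm{v}}(e)$ use the fibrewise straight lines $t\mapsto e+tA(\pi(e))(e)$ and $t\mapsto e+tb(\pi(e))$, whose velocities at $t=0$ are $A^{\mathrm{e}}(e)$ and $b^{\mathrm{v}}(e)$ by the very definition of $\vlft$.

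With these curves the $k=1$ statements are one-line computations. For $f^{\mathrm{h}}=\pi^*f$: differentiating $f^{\mathrm{h}}\circ\eta=f\circ\gamma$ gives $\scrLXh f^{\mathrm{h}}(e)=\dd f(X_0)(\pi(e))=(\scrL_{X_0}f)^{\mathrm{h}}(e)$, whereas $f^{\mathrm{h}}$ is constant along the fibrewise lines, so $\scrLAe f^{\mathrm{h}}=\scrLbv f^{\mathrm{h}}=0$, which is~\ref{item:Xafffh}. For $\lambda^{\mathrm{e}}$, along $\eta$ one has $\lambda^{\mathrm{e}}(\eta(t))=\langle\lambda(\gamma(t)),\eta(t)\rangle$, and the compatibility of the dual connection $\nabla^*$ on $\E^*$ with the pairing together with $\nabla_{\gamma'}\eta=0$ gives $\ddt\langle\lambda(\gamma(t)),\eta(t)\rangle=\langle\nabla^*_{\gamma'}\lambda,\eta\rangle$, so at $t=0$, $\scrLXh\lambda^{\mathrm{e}}(e)=\langle(\nabla^*_{X_0}\lambda)(\pi(e)),e\rangle=(\nx\lambda)^{\mathrm{e}}(e)$, where $\nx\lambda$ is read as $\nabla^*_{X_0}\lambda$. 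Along the fibrewise line for $A^{\mathrm{e}}$, $\lambda^{\mathrm{e}}(e+tA(\pi(e))(e))=\langle\lambda,e\rangle+t\langle\lambda,A(\pi(e))(e)\rangle$, so $\scrLAe\lambda^{\mathrm{e}}(e)=\langle A^*(\pi(e))\lambda(\pi(e)),e\rangle=(A^*\lambda)^{\mathrm{e}}(e)$; and along the line for $b^{\mathrm{v}}$, $\lambda^{\mathrm{e}}(e+tb(\pi(e)))=\langle\lambda,e\rangle+t\langle\lambda,b\rangle(\pi(e))$, so $\scrLbv\lambda^{\mathrm{e}}(e)=\langle\lambda,b\rangle(\pi(e))=\langle\lambda,b\rangle^{\mathrm{h}}(e)$, which is~\ref{item:bvlambda}.

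The statements for general $k$ then follow by induction: $\scrL_{X_0}f\in\C^r(U)$, the objects $\nx\lambda=\nabla^*_{X_0}\lambda$ and $A^*\lambda$ are again local sections of $\E^*$ over $U$, and all the lifts preserve the regularity $r\in\{\infty,\omega\}$, so applying the $k=1$ identities repeatedly yields, e.g., $\scrLXh^kf^{\mathrm{h}}=\scrLXh^{k-1}(\scrL_{X_0}f)^{\mathrm{h}}=(\scrL_{X_0}^{k}f)^{\mathrm{h}}$, $\scrLXh^k\lambda^{\mathrm{e}}=(\nx^k\lambda)^{\mathrm{e}}$, and $\scrLAe^k\lambda^{\mathrm{e}}=((A^*)^k\lambda)^{\mathrm{e}}$; for~\ref{item:Xafffh}, once $\scrLAe f^{\mathrm{h}}=\scrLbv f^{\mathrm{h}}=0$ all higher derivatives vanish trivially.

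There is no serious obstacle here: the computation is routine, and the only points requiring care are that $\nx$ applied to a section of $\E^*$ must be interpreted as the dual connection $\nabla^*_{X_0}$ (consistent with the relation $(X_0^{\mathrm{h}}+A^{\mathrm{e}})^*=X_0^{\mathrm{h},*}-(A^*)^{\mathrm{e}}$ recorded above), and that one uses the defining property that $t\mapsto e+tf$ has velocity $\vlft(e,f)$ at $t=0$. A fully coordinate-based argument — expanding $f$, $\lambda$, $A$, $b$ and the connection coefficients of $\nabla$ in a local frame over a chart — is an equally valid but more mechanical alternative.
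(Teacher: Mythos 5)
Your proposal is correct: evaluating the three lifts on $f^{\mathrm{h}}$ and $\lambda^{\mathrm{e}}$ via adapted curves (parallel transport for $X_0^{\mathrm{h}}$, fibrewise lines for $A^{\mathrm{e}}$ and $b^{\mathrm{v}}$), noting that each first Lie derivative is again a horizontal lift of a function or a vertical evaluation of a section (with $\nx$ on $\E^*$ read as the dual connection), and then inducting on $k$ is exactly the standard elementary argument. The paper itself omits the proof, deferring to \autocite{ADL:20a}, and your computation is the one intended there; no gaps.
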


\begin{paragr}
	Let $\bbG$ be a $\C^r$-Riemannian metric on $\M$\@. Then $\bbG$ defines a
	musical isomorphism, the flat isomorphism
	$\bbG^\flat\colon\TM\to\T^*\M$. This is defined by
	$\bbG^\flat(v_x)(u_x)=\bbG(u_x,v_x)$ for $u_x,v_x\in\T_x\M$.  The inverse of
	$\bbG^\flat$ is denoted by $\bbG^\sharp$\@.  We denote by
	$\stackrel{\mathclap{ \scriptscriptstyle \G}}{\nabla}$ the Levi-Civita
	connection associated with $\bbG$\@.
\end{paragr}

\begin{paragr}\label{par:affine-distribution}
	Let $\D \subseteq \TM$ be a $\C^r$-distribution, i.e., a $\C^r$-subbundle of
	$\TM$.  If $\mathbb{G}$ is a $\C^r$-Riemannian metric on $\M$, we denote the
	$\G$-orthogonal complement of $\D$ by $\D^\perp$.  The {$ \G$-orthogonal
		projections} are given by the vector bundle maps $P_\D\colon\TM\to\D$ and
	$P_{\D^\perp}\colon\TM\to\D^\perp$ projecting onto $\D$ and $\D^\perp$,
	respectively.  The constrained connection is the affine connection
	$\stackrel{\mathclap {\scriptscriptstyle \D}}{\nabla}$ on $\M$ that is
	defined by
	\begin{equation*}
		\stackrel{\mathclap {\scriptscriptstyle \D}}{\nabla}
		_XY=\stackrel{\mathclap{ \scriptscriptstyle \G}}{\nabla}
		_XY + (\stackrel{\mathclap {\scriptscriptstyle \G}}{\nabla}_XP_{\D^\perp}) Y,\quad X,Y\in\Gamma^r(\TM).
	\end{equation*}
	It follows that $\nabd_X Y = P_\D \circ \nabg_X Y$.  The second fundamental
	form for $\D$ is the tensor field
	$S_\D\in \Gamma^r(\TsM\otimes \D^*\otimes \D^\perp)$ defined by
	\begin{equation*}
		S_\D(X,Y)=-(\stackrel{\mathclap {\scriptscriptstyle \G}}{\nabla}_XP_{\D^\perp}) Y,\quad X\in\Gamma^r(\TM),\ 
		Y \in \Gamma^r(\D).
	\end{equation*}
	\rtxt{Using $S_\D$, we define a symmetric and an alternating tensor fields on
		$\D^*$ taking values in $\D^\perp$.  First, is the {geodesic curvature}
		$G_\D\in\Gamma^r( S^2(\D^*)\otimes \D^\perp)$ which is defined by
		\begin{equation*}
			G_\D(X,Y)=S_\D(X,Y)+S_\D(Y,X)\quad X,Y\in\Gamma^r(\D),
		\end{equation*}
		and second is the Frobenius curvature 
		$F_\D\in\Gamma^r(\bigwedge^2(\D^*)\otimes \D^\perp)$ which is defined by
		\begin{equation*}
			F_\D(X,Y)=S_\D(X,Y)-S_\D(Y,X)\quad X,Y\in\Gamma^r(\D).
	\end{equation*}}
	
	Related to the above definitions of $F_\D$ and $G_\D$, we consider their
	transposes.  We define $F^\star_\D,G^\star_\D\in \Gamma^r(\D\otimes\D^*)$
	such that, for $\beta\in\Gamma^r(\D^\perp)$ and $X,Y\in\Gamma^r(\D)$,
	\begin{equation*}
		\G(X,F^\star_\D(\beta)(Y))=\G(\beta,F_\D(X,Y)),\quad
		\G(X,G^\star_\D(\beta)(Y))=\G(\beta,G_\D(X,Y))
	\end{equation*}
	holds. Similarly, we define the maps $F^*_\D$
	and $G^*_\D$:
	\begin{equation*}
		\G(X,F^*_\D(Y)(\beta))=\G(\beta,F_\D(X,Y)),\quad
		\G(X,G^*_\D(Y)(\beta))=\G(\beta,G_\D(X,Y)).
	\end{equation*}
	This can be seen as flipping of arguments, that is
	\begin{equation*}
		F^\star_\D(\beta)(Y)=F^*_\D(Y)(\beta),\quad
		G^\star_\D(\beta)(Y)=G^*_\D(Y)(\beta).
	\end{equation*}
\end{paragr}

\subsection{(Co)generalized (affine) subbundles and their
	invariance}\label{subsec:cogen}

Let $\pi\colon \E\to \M$ be a $\C^r$-vector bundle.  We wish to consider
generalizations of subbundles for which the rank is allowed to vary.  The
first sort of such a generalization is the following.
\begin{definition}
	A \emph{generalized subbundle} of $\E$ is a subset $\F \subseteq \E$ such
	that, for every $x\in \M$, there exists an open neighborhood $U$ of $x$ and a
	collection of sections $\{\xi_i\}_{i\in I} \subseteq \Gamma^r(\pi^{-1}(U))$
	such that
	\begin{equation*}
		\pi^{-1}(y)\cap \F = \mathrm{span}_\mathbb{R} \left\{\xi_i(y)\ | \ i \in I \right\}, \quad y \in U.
	\end{equation*}
\end{definition}

The local sections are called the \emph{local generators} of $\F$.  Without
loss of generality, these can be taken to be a finite collection of global
sections of $\E$ \autocite[Corollary~2.18]{ADL:20a}.  For each $x\in \M$, the
subspace $\pi^{-1}(x)\cap \F$ is denoted by $\F_x$ and its annihilator by
$\Lambda(\F)_x\subseteq \E^*_x$. We denote the union of all annihilators by
$\Lambda(\F)\subseteq \E^*$, that is, $\E^*_x\cap
\Lambda(\F)=\Lambda(\F)_x$. For any subset $S\subseteq \M$, the set
$\F\cap \pi^{-1}(S)$ is denoted by $\F|S$.

Using annihilators, we can define another sort of a generalization of a subbundle.
\begin{definition}
	A \emph{cogeneralized subbundle} is a subset $\F \subseteq \E$ such that
	$\Lambda(\F)$ is generalized subbundle of $\E^*$\@.
\end{definition}

For a (co)generalized subbundle $\F\subset\E$\@, a point $x\in \M$ is a
\emph{regular point} of $\F$ if there exists a neighborhood $U$ of $x$ such
that $\F|U$ has constant rank. A \emph{singular point} of $\F$ is a point
which is not a regular point.
\begin{remark}\label{rem:dense}
	For any generalized or cogeneralized subbundle $\F$, there exists an open
	dense set $U\subseteq \M$ consisting of regular points of $\F$
	\autocite[Lemma~2.20]{ADL:20a}. That is, $\F|U$ is a subbundle of $\E|U$. In
	particular, all points are regular for subbundles of $\E$.
\end{remark}

As we will be working with affine vector fields, the notion of an invariant
subbundle is of less interest than that of an invariant \emph{affine}
subbundle.  Thus we need to define what we mean by an affine subbundle.
\begin{definition}
	A \emph{generalized affine subbundle} is a subset $\B\subset\E$ for which
	there exists a generalized subbundle $\F\subset\E$ and a section
	$\xi_0\in\Gamma^r(\TM)$ such that
	\begin{equation*}
		\B_x:=\B\cap\E_x=\xi_0(x)+\F_x.
	\end{equation*}
\end{definition}

One can phrase the definition of a generalized affine subbundle in terms of
generators~\autocite[Lemma~2.23]{ADL:20a}\@.

There is also a cogeneralized version of an affine subbundle.
\begin{definition}
	A \emph{cogeneralized affine subbundle} is a subset $\B\subset\E$ for which
	there exists a cogeneralized subbundle $\F\subset\E$ and a section
	$\xi_0\in\Gamma^r(\TM)$ such that
	\begin{equation*}
		\B_x:=\B\cap\E_x=\xi_0(x)+\F_x.
	\end{equation*}
\end{definition}

Now we give the definitions for invariance of (co)generalized (affine)
subbundles under affine vector fields.  In \autocite{ADL:20a}\@, two types of
invariance are discussed, ``flow-invariance'' and ``invariance,'' and much of
the work of the paper is devoted to giving the relationships between these.
The notion of ``flow-invariance'' is exactly what one would expect: \emph{a set is
	{flow-invariant} under a vector field if the integral curves through
	points in the set remain in the set.}  The notion of ``invariance'' is more
delicate, and involves the ideal sheaf of the set,~i.e.,~the sheaf of
functions that vanish on the set.  Our interest is in (flow-)invariant sets
that are (co)generalized (affine) subbundles.  To this end, we make two
comments.
\begin{enumerate}[1.,nosep]
	\item For subbundles and affine subbundles, the ideal sheaf can be
	characterized by particular classes of functions, namely linear and affine
	functions, respectively.
	\item The ideal sheaf is only meaningful for cogeneralized (affine)
	subbundles because these are naturally defined by the vanishing of suitable
	classes of functions.
\end{enumerate}
We refer to \autocite[\S4]{ADL:20a} for a detailed discussion of this.  What
we shall do here is give the results that characterize the invariant objects
that are of interest to us here.  To do so, we denote by $\calC^r_{\M}$ the
sheaf of $\C^r$-functions on $\M$ and by $\calG^r_{\F}$ the sheaf of sections
of a generalized subbundle $\F$\@.

The following is a characterization of flow-invariance in terms of algebraic
and differential conditions. See \autocite[\S4]{ADL:20a} for the details.
\begin{proposition}\label{prop:cogenflow}
	Let $\B=\xi_0+\F$ be a cogeneralized affine subbundle. If $\B$ is
	flow-invariant under
	$\Xaff=X_0^{\mathrm{h}} + A^{\mathrm{e}} + b^{\mathrm{v}}$\@, then the
	following conditions hold:
	\begin{enumerate}[(i),nosep]
		\item $A(x) \in \End(\F_x)$ for all $x\in \M$\@;
		\item \label{pl:cogenflow2} $\nabla_{X_0} (\calG^r_{\Lambda(\F)}) \subseteq\calG^r_{\Lambda(\F)}$\@;
		\item $(\nabla_{X_0}\xi_0-A\circ\xi_0-b)(x)\in\F_x$\@,\/ $x\in\M$\@.
	\end{enumerate}
	The converse is true in the real analytic case and if\/ $\F$ is a subbundle
	in the smooth case.
\end{proposition}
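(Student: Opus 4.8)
The plan is to translate flow-invariance of $\B$ into a condition on its ideal sheaf and then differentiate along the flow of $\Xaff$.  Since $\B=\xi_0+\F$ is a cogeneralized affine subbundle, $\Lambda(\F)$ is a generalized subbundle of $\E^*$, so I would fix global sections $\lambda_1,\dots,\lambda_m$ of $\E^*$ generating the $\C^r(\M)$-module $\Gamma^r(\Lambda(\F))$ (as recalled after the definition of a generalized subbundle), and note that near any point $\B$ is the common zero set of the affine functions $g_j:=\lambda_j^{\mathrm{e}}-\langle\lambda_j,\xi_0\rangle^{\mathrm{h}}$.  Flow-invariance says that for each $e\in\B$ the function $t\mapsto g_j(\Fl^{\Xaff}_t(e))$ vanishes identically, hence all its $t$-derivatives at $0$ vanish; equivalently, $\scrL^k_{\Xaff}g_j$ vanishes on $\B$ for all $k$ and $j$.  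The computational heart is the Lie derivative of $g_\lambda:=\lambda^{\mathrm{e}}-\langle\lambda,\xi_0\rangle^{\mathrm{h}}$ for an arbitrary local section $\lambda$ of $\Lambda(\F)$: expanding $\scrL_{\Xaff}=\scrL_{X_0^{\mathrm{h}}}+\scrL_{A^{\mathrm{e}}}+\scrL_{b^{\mathrm{v}}}$, applying Lemma~\ref{lem:itrLieDeriv} with $k=1$, and using $\nabla_{X_0}\langle\lambda,\xi_0\rangle=\langle\nabla_{X_0}\lambda,\xi_0\rangle+\langle\lambda,\nabla_{X_0}\xi_0\rangle$, one finds
\begin{equation*}
  \scrL_{\Xaff}g_\lambda
  =g_{\nabla_{X_0}\lambda+A^*\lambda}-\bigl\langle\lambda,\,\nabla_{X_0}\xi_0-A\circ\xi_0-b\bigr\rangle^{\mathrm{h}} .
\end{equation*}

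Next I would extract (iii) and a combined form of (i)--(ii) from this identity applied to $\lambda=\lambda_j$, where the left side --- and hence the right side --- vanishes on $\B$.  Evaluating the right side at $\xi_0(x)\in\B_x$ annihilates $g_{\nabla_{X_0}\lambda_j+A^*\lambda_j}$ (every function $g_\mu$ vanishes at $\xi_0(x)$), leaving $\langle\lambda_j(x),(\nabla_{X_0}\xi_0-A\circ\xi_0-b)(x)\rangle=0$; since the $\lambda_j(x)$ span $\Lambda(\F)_x$ and $\F_x$ is its annihilator, this is (iii).  Using (iii) and re-evaluating the right side at a general point $\xi_0(x)+v\in\B_x$ with $v\in\F_x$, where $g_\mu$ takes the value $\langle\mu(x),v\rangle$, gives $\langle(\nabla_{X_0}\lambda_j+A^*\lambda_j)(x),v\rangle=0$ for all $v\in\F_x$, so $(\nabla_{X_0}\lambda_j+A^*\lambda_j)(x)\in\Lambda(\F)_x$; running over $j$ and using the module relations then yields $(\nabla_{X_0}+A^*)(\Gamma^r(\Lambda(\F)))\subseteq\Gamma^r(\Lambda(\F))$.

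The last and hardest step is to separate this single relation into the algebraic condition (i) on $A$ and the differential condition (ii) on $\nabla_{X_0}$ --- the relation by itself cannot distinguish the $\C^r(\M)$-linear operator $A^*$ from the derivation $\nabla_{X_0}$, so more structure must be brought in.  To pin down (i) I would reduce to the linear setting: if $e_1,e_2\in\B_x$ then the two $\Xaff$-integral curves through them cover the same $X_0$-integral curve $\gamma$, so their difference $\delta=\eta_1-\eta_2$ satisfies the homogeneous equation $\nabla_{\gamma'}\delta=A\circ\delta$ and stays in $\F$, with $\delta(0)$ free in $\F_x$; hence $\F$ is flow-invariant under $\Xlin=X_0^{\mathrm{h}}+A^{\mathrm{e}}$, and confronting this invariance at a regular point $x$ with the fibrewise flow $e\mapsto\exp(tA(x))e$ generated by the vertical part of $\Xlin$ should force $A(x)(\F_x)\subseteq\F_x$, after which (ii) drops out of the combined relation by subtracting $A^*$.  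The care here lies in choosing local generators of $\Lambda(\F)$ with prescribed value and $\nabla_{X_0}$-derivative at a regular point and then passing from the dense set of regular points (Remark~\ref{rem:dense}) to all of $\M$; this is precisely where real-analyticity, or the hypothesis that $\F$ is an honest subbundle, enters, and it is also what makes the converse work --- there one runs the argument backwards, observing that along any $\Xaff$-integral curve the scalars $\psi_i(s)=\langle\lambda_i(\gamma(s)),\eta(s)-\xi_0(\gamma(s))\rangle$ satisfy, by (i) and (ii), a homogeneous linear system $\dot\psi_i=\sum_j c_{ij}\psi_j$ with $\psi_i(0)=0$ (condition (iii) killing the inhomogeneous term), so $\psi_i\equiv0$ and the curve remains in $\B$.
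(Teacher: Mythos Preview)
The paper does not itself prove this proposition; it is quoted from~\autocite{ADL:20a} with the argument deferred there, so there is no in-paper proof to compare against.

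Your computation of $\scrL_{\Xaff}g_\lambda$ is correct, and from it you cleanly obtain condition~(iii) and the combined relation $(\nabla_{X_0}+A^*)\bigl(\Gamma^r(\Lambda(\F))\bigr)\subseteq\Gamma^r(\Lambda(\F))$; your converse argument is also sound.  The real gap is precisely the step you yourself flag as hardest, namely separating the combined relation into~(i) and~(ii).  Your differencing trick does show that $\F$ is flow-invariant under $\Xlin=X_0^{\mathrm{h}}+A^{\mathrm{e}}$, but feeding that back through the same Lie-derivative reasoning (now for $\lambda^{\mathrm{e}}$ and $\Xlin$) simply reproduces the combined relation; nothing new is gained.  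The appeal to a ``fibrewise flow $e\mapsto\exp(tA(x))e$'' is not legitimate either: $\Xlin$ is tangent to the fibre $\E_x$ only where $X_0(x)=0$, so flow-invariance of $\F$ under $\Xlin$ carries no information about $\exp(tA(x))$ acting on $\F_x$.

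In fact the separation cannot be made from flow-invariance alone.  Take $\M=\bbR$ with coordinate $t$, $\E=\M\times\bbR^2$ with the trivial connection, $X_0=\partial_t$, $b=0$, $\xi_0=0$, $A$ the rotation generator $A(u_1,u_2)=(-u_2,u_1)$, and $\F_t=\bbR\cdot(\cos t,\sin t)$.  The flow of $\Xaff=\Xlin$ rotates fibre vectors at unit angular speed while advancing $t$ at unit speed, so it carries $\F_{t}$ to $\F_{t+s}$ and $\B=\F$ is flow-invariant.  Yet $A(\cos t,\sin t)=(-\sin t,\cos t)\notin\F_t$, so~(i) fails, and for the generator $\lambda=(-\sin t,\cos t)$ of $\Lambda(\F)$ one has $\nabla_{X_0}\lambda=(-\cos t,-\sin t)\notin\Lambda(\F)_t$, so~(ii) fails; only their sum lies in $\Lambda(\F)$.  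Thus either the statement as reproduced here has dropped a hypothesis, or the two conditions (i)--(ii) in~\autocite{ADL:20a} are packaged there as the single combined relation; you should consult the source directly before trying to close this step.
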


The same result holds for generalized subbundles, but the differential
condition is on $\F$ not its annihilator. That is, the second condition
$\nabla_{X_0} (\calG^r_{\F}) \subseteq\calG^r_{\F}$.  Note that the
conditions of the proposition are differential/algebraic in nature.  One of
the goals of this paper is to show that the differential part of the
result defines a linear partial differential equation that is formally
integrable under certain hypotheses.

\subsection{Affine subbundle varieties and their
	invariance}\label{subsec:affvars}

The essential problem of interest in \autocite{ADL:20a} is the following:
given a cogeneralized subbundle $\F\subset\E$ and an affine vector field
$\Xaff$ on $\E$\@, find a cogeneralized affine subbundle $\B$ that is
(a)~invariant under $\Xaff$ and (b)~contained in $\F$\@.  The difficulty with
this sort of problem is that the affine subbundle may be empty, or have
nonempty fibers over a strict subset of $\M$\@.  To facilitate the analysis
in this case, it turns out to be most convenient to recast the problem from
one of solutions to linear equations to the linear equations themselves.  In
this way, one can talk about the equation, even when it has no solutions.

We begin with the algebraic setting.

\begin{paragr}
	Let $\V$ be a finite-dimensional $\mathbb{R}$-vector space, let
	$A\in \mathrm{End}(\V)$, and let $b\in\V$. Consider the set
	\begin{equation*}
		\mbox{Sol}(A,b) = \left\{ v\in \V\ |\  A(v)+b =0 \right\}.
	\end{equation*}
	Using the dual $A^*$, we define the subspace
	\begin{equation*}
		\mbox{Sol}^*(A,b) = \left\{   
		(A^*(\lambda),\langle \lambda, b\rangle) \in \V^*\oplus\mathbb{R}
		\ |\ \lambda \in \V^*\right\}.
	\end{equation*}
	This subspace has a positive codimension in $\V^*\oplus\mathbb{R}$.
	Alternatively, if $\Delta \subseteq\V^*\oplus\mathbb{R}$ is a subspace of
	positive codimension, then $\Delta=\mbox{Sol}^*(A,b)$ for some
	$A\in\mbox{End}(\V)$ and $b\in \V$. Furthermore, one can easily see that
	\begin{equation*}
		\mbox{Sol}(A,b) = \{v\in\V\ |\ (v,1)\in\Lambda(\Delta)\}.
	\end{equation*}
	We remark that the set $\mbox{Sol}(A,b)$ uniquely defines $\Delta$, but does
	not uniquely define $A$ and $b$ \autocite[Lemma~2.29]{ADL:20a}.
\end{paragr}

Now we adapt this to the geometric setting.  By $\mathbb{R}_{\M}$ we denote
the trivial line bundle over $\M$\@.
\begin{definition}
	\begin{enumerate}[(i),nosep]
		\item A generalized subbundle $\Delta \subseteq \E^*\oplus \mathbb{R}_\M$ is
		a \emph {defining subbundle} if $\Delta_x$ has positive codimension in
		$\E^*_x\oplus\mathbb{R}$ for all $x\in\M$.
		
		\item A subset $\A \subseteq \E$ is called an \emph{affine subbundle variety}
		if there exists a defining subbundle $\Delta$ such that
		\begin{equation*}
			\A=\A(\Delta):=\{ e\in\E\ |\ \lambda(e)+a=0,\  (\lambda,a)\in
			\Delta_{\pi(e)} \}.
		\end{equation*} 
		
		\item The subset 
		\begin{equation*}
			\mathsf{S}(\A)=\{x\in\M\ |\ \A\cap\E_x\neq \emptyset \}
		\end{equation*}
		is called the \emph{base variety} of $\A$.
	\end{enumerate}
	A defining subbundle $\Delta$ is \emph{total} if $\sS(\A(\Delta))=\M$,
	\emph{partial} if $\emptyset\neq\sS(\A(\Delta))\subsetneq \M$, and
	\emph{null} if $\sS(\A(\Delta))=\emptyset$.
\end{definition}

The idea of the terminology of total (resp.\ partial, null) is that the
linear equation defined by $\Delta$ has solutions for all (resp.\ some, no)
points in $\M$\@.

Suppose that $\A$ is an affine subbundle variety. Let $\A_x$ denote the
affine subspace $\A\cap\E_x$, for each $x\in\mathsf{S}(\A)$.  It follows that
there exists a unique subspace $\Delta_x \subseteq\E^*_x\oplus\mathbb{R}$
such that $\A_x=\{ e\in\E_x\ |\ (e,1)\in\Lambda(\Delta_x) \}$.  Let
$\Delta_{1,x}$ denote the image of $\Delta_x$ under the projection
\begin{equation*}
	\E_x^*\oplus\mathbb{R} \to (\E_x^*\oplus\mathbb{R})/(\{0\}\oplus\mathbb{R})
	\simeq \E_x^*,
\end{equation*} 
and set $\Delta_1=\cup_{x\in\M}\Delta_{1,x}$.

\begin{paragr}
	Before \rtxt{characterizing} the flow-invariance of defining subbundles and
	affine subbundle varieties, we replace the affine vector field $\Xaff$ on
	$\E$ with a linear vector field $\what{\Xaff}$ on $\E\oplus\bbR_\M$.  Let
	$\Xaff=X_0^{\mathrm{h}} + A^{\mathrm{e}}+ b^{\mathrm{v}}$ be an affine vector
	field over $X_0$ and set $\Xlin = X_0^{\mathrm{h}} + A^{\mathrm{e}}$.  Let
	$\what{\nabla}$ be the connection on $\E\oplus\mathbb{R}_\M$ obtained by
	taking the direct sum of $\nabla$ in $\E$ and the flat connection in
	$\mathbb{R}_\M$. That is
	\begin{equation*}
		\what{\nabla}_X(\xi,f)=(\nabla_X\xi,\mathscr{L}_Xf),\quad
		X\in\Gamma^r(\TM),\ (\xi,f)\in\Gamma^r(\E\oplus\mathbb{R}_\M).
	\end{equation*}
	We define a linear vector field on $\E\oplus\mathbb{R}_\E$ by
	\begin{equation*}
		\what{\Xaff} = X_0^{\mathrm{h}} + \what{(A,b)}^{\mathrm{e}},
	\end{equation*}
	where the horizontal lift of $X_0$ is done using $\what{\nabla}$, and
	$\what{(A,b)}\in\Gamma^r(\mathrm{End}(\E\oplus\mathbb{R}_\M))$ is defined
	by
	\begin{equation*}
		\what{(A,b)}(\xi,f) = (A\xi+fb, 0),\quad
		(\xi,f)\in\Gamma^r(\E\oplus\mathbb{R}_\M).
	\end{equation*}
	We note that the dual of this linear vector field is
	\begin{equation*}
		\what{\Xaff}^* = X_0^{\mathrm{h}} - \what{(A,b)}^{*,\mathrm{e}}.
	\end{equation*}
	The importance of the vector field $\what{\Xaff}$ is explained by the
	following result~\cite[Lemma~4.18]{ADL:20a}\@.
	\begin{proposition}
		Let\/ $\Delta$ be a defining subbundle with\/ $\A$ the associated affine
		subbundle variety.  Then the following statements are equivalent:
		\begin{enumerate}[(i),nosep]
			\item $\A\subset\E$ is flow-invariant under\/ $\Xaff$\@;
			\item $\{(e,1)\in\E\times\mathbb{R}_\M\ |\ e\in\A\}$ is flow-invariant
			under\/ $\what{\Xaff}$\@;
			\item $\Delta$ is flow-invariant under\/ $\what{\Xaff}^*$\@.
		\end{enumerate}
	\end{proposition}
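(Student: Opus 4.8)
The plan is to establish the two equivalences (i)$\Leftrightarrow$(ii) and (ii)$\Leftrightarrow$(iii) separately: the first by lifting integral curves to the ``level~$1$'' of $\E\oplus\mathbb{R}_\M$, and the second by the duality between flow-invariance of a generalized subbundle and flow-invariance of its annihilator under the dual linear vector field. For (i)$\Leftrightarrow$(ii), the key observation is that $\what{\Xaff}$ is fibred over the ``levels'' of the trivial summand: $\what{(A,b)}(\xi,f)$ has vanishing $\mathbb{R}_\M$-component, and the horizontal lift of $X_0$ to $\E\oplus\mathbb{R}_\M$ is formed with respect to $\what{\nabla}$, whose $\mathbb{R}_\M$-part is flat, so along any integral curve $t\mapsto(\eta(t),f(t))$ of $\what{\Xaff}$ the function $f$ is constant and $\Fl^{\what{\Xaff}}_t$ preserves each set $\E\times\{c\}$. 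Restricting to $c=1$ and writing out the integral-curve equation (using $\nabla_{\gamma'}\eta=A\circ\eta+b\circ\gamma$ together with $\what{(A,b)}(\eta,1)=(A\circ\eta+b\circ\gamma,0)$) shows that $t\mapsto\eta(t)$ is an integral curve of $\Xaff$ precisely when $t\mapsto(\eta(t),1)$ is an integral curve of $\what{\Xaff}$. Hence $e\mapsto(e,1)$ is a bijection between integral curves of $\Xaff$ and integral curves of $\what{\Xaff}$ through the level $c=1$, and it carries $\A$ onto $\{(e,1)\ |\ e\in\A\}$; flow-invariance of either set is equivalent to flow-invariance of the other.

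For (ii)$\Leftrightarrow$(iii), the first ingredient is the general fact that for a linear vector field $Y$ over $X_0$ a generalized subbundle $\F$ is flow-invariant under $Y$ if and only if $\Lambda(\F)$ is flow-invariant under $Y^*$; this holds because $\Fl^Y_t$ is a vector bundle isomorphism whose fibrewise inverse transpose is $\Fl^{Y^*}_t$, and the inverse transpose of an isomorphism sends the annihilator of a subspace to the annihilator of its image. Applying this to the generalized subbundle $\Delta$ of $\E^*\oplus\mathbb{R}_\M$ and the linear vector field $\what{\Xaff}^*$, and using $(\what{\Xaff}^*)^*=\what{\Xaff}$, reduces (iii) to the statement that $\Lambda(\Delta)$ is flow-invariant under $\what{\Xaff}$. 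Since $\{(e,1)\ |\ e\in\A\}=\Lambda(\Delta)\cap(\E\times\{1\})$ and $\what{\Xaff}$ preserves the level $c=1$, flow-invariance of $\Lambda(\Delta)$ gives (ii) at once. For the converse, the relevant identity is $\Lambda(\Delta)_x=\Span_{\mathbb{R}}\{(e,1)\ |\ e\in\A_x\}$ for every $x\in\sS(\A)$ — because $\A_x$ is an affine subspace whose linear part, together with any one of its points lifted to level $1$, spans $\Lambda(\Delta)_x$ — so that, by the fibrewise linearity of $\Fl^{\what{\Xaff}}_t$, flow-invariance of $\{(e,1)\ |\ e\in\A\}$ propagates to flow-invariance of $\Lambda(\Delta)$ over $\sS(\A)$.

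The step I expect to be the main obstacle is the behaviour of $\Lambda(\Delta)$ over the complement $\M\setminus\sS(\A)$. There $\A_x=\emptyset$, so the span identity is vacuous, yet to obtain (iii) one must still know that $\Lambda(\Delta)$ is flow-invariant over that set; since every fibre of $\Lambda(\Delta)$ there lies in the $\what{\Xaff}$-invariant set $\E\times\{0\}$, this amounts to flow-invariance of the projected subbundle $\Delta_1\subseteq\E^*$ under $X_0^{\mathrm{h}}-(A^*)^{\mathrm{e}}$. Dealing with this carefully is where one has to use that $\sS(\A)$, and hence its complement, is itself flow-invariant (an immediate consequence of flow-invariance of $\A$), so that the complement is a union of maximal integral curves of $X_0$, together with the structural properties of $\Delta$ as a defining subbundle — in particular the lower semicontinuity of the rank of a generalized subbundle and the density of its regular points (Remark~\ref{rem:dense}). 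The remaining verifications — that $\what{\Xaff}$ preserves the levels, that $\Fl^{Y^*}_t$ is the fibrewise inverse transpose of $\Fl^Y_t$, and the span identity over $\sS(\A)$ — are routine.
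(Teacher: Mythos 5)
Your argument for (i)$\Leftrightarrow$(ii) (level sets $\E\times\{c\}$ are preserved because the $\mathbb{R}_\M$-component of $\what{(A,b)}$ vanishes and $\what{\nabla}$ is flat on the trivial factor, and the integral-curve equation at level $1$ is exactly $\nabla_{\gamma'}\eta=A\circ\eta+b\circ\gamma$) is correct, as is the duality step giving (iii)$\Rightarrow$(ii) and the span identity $\Lambda(\Delta)_x=\Span_{\mathbb{R}}\{(e,1)\ |\ e\in\A_x\}$ for $x\in\sS(\A)$, which (with fibrewise linearity of $\Fl^{\what{\Xaff}}_t$ and double annihilation) does yield flow-invariance of $\Delta$ over $\sS(\A)$. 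Note that the paper itself gives no proof of this statement\textemdash{}it is quoted from Lemma~4.18 of \autocite{ADL:20a}\textemdash{}so the only thing to assess is whether your argument is complete, and it is not: the step you yourself flag as the main obstacle, namely (ii)$\Rightarrow$(iii) over $\M\setminus\sS(\A)$, is a genuine gap, and the ingredients you propose (flow-invariance of the complement, lower semicontinuity of rank, density of regular points) cannot close it, because condition (ii) carries no information at all about $\Lambda(\Delta)$ over points where $\A_x=\emptyset$.

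In fact, with the purely set-theoretic notion of flow-invariance recalled in Section~2.2, the implication (ii)$\Rightarrow$(iii) fails as stated for null (and more generally partial) defining subbundles, so no argument of the kind you sketch can exist without importing extra hypotheses. Take $\M=\mathbb{R}$, $\E=\M\times\mathbb{R}^2$ with the trivial flat connection, $X_0=\partial_x$, $A=0$, $b=0$, and let $\Delta\subseteq\E^*\oplus\mathbb{R}_\M$ be the real-analytic rank-two (hence positive-codimension) generalized subbundle with fibres $\Delta_x=\Span\{(0,0,1),(\cos x,\sin x,0)\}$. Since $(0,1)\in\Delta_x$ the fibrewise affine equations are inconsistent, so $\A(\Delta)=\emptyset$ and (i), (ii) hold vacuously; but $\what{\Xaff}^*$ is just parallel translation, while the line $\Span\{(\cos x,\sin x)\}\oplus\{0\}\subseteq\Delta_x$ rotates along the flow, so $\Delta$ is not flow-invariant and (iii) fails. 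The conclusion is that the cited Lemma~4.18 of \autocite{ADL:20a} must be read with hypotheses or conventions (for instance, restricting attention to the behaviour over $\sS(\A)$, or to total/nonempty defining subbundles) that the statement transcribed here suppresses; a complete proof must either invoke those hypotheses explicitly or restrict the equivalence of (ii) and (iii) to the part of $\Delta$ lying over $\sS(\A)$, which is exactly what your span-identity argument delivers.
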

\end{paragr}

The previous result allows us to replace the quite subtle notion of
invariance of an affine subbundle variety under an affine vector field with
the notion of invariance of an affine subbundle variety under a linear vector
field on an extended bundle, and then to replace this with the notion of
invariance of a subbundle on the dual bundle of the extended bundle under
a linear vector field.  By working with $\Delta$ rather than $\A(\Delta)$\@,
one does not have to concern oneself with whether $\A(\Delta)$ is nonempty
when discussing invariance.

\begin{paragr}
	The constructions from the proceeding paragraph allow for a convenient test
	for invariance of affine subbundle varieties.  As we mentioned at the
	beginning of this section, we are interested in invariant affine subbundle
	varieties that are contained in a cogeneralized subbundle $\F$\@.  To
	intertwine invariance and containment in $\F$\@, we introduce the following
	notation for a cogeneralized subbundle $\F$ and a defining subbundle
	$\Delta$\@:
	\begin{equation*}
		\what{\Lambda(\Delta)} = \Lambda(\Delta)\cap(\E\times\{1\}),\quad
		\what{\F}=\{(e,1)\in\E\oplus\mathbb{R}_\M\ |\ e\in\F\}.
	\end{equation*}
	It is easy to see that the affine subbundle variety associated with $\Delta$
	is
	\begin{equation*}
		\A=\{e\in\E\ |\ (e,1)\in\what{\Lambda(\Delta)}\}
	\end{equation*}
	and that $\A\subset\F$ if and only if
	$\what{\Lambda(\Delta)}\subset\what{\F}$\@.
	
	We adapt the following result from
	\autocite[Remark~4.26]{ADL:20a}.
	\begin{proposition}\label{prop:defnsub}
		Let\/ $\F$ be a cogeneralized subbundle and suppose that\/ $\Delta$ is a
		defining subbundle that is flow-invariant under\/
		$\Xaff\rtxt{=X_0^{\mathrm{h}} + A^{\mathrm{e}}+ b^{\mathrm{v}}}$. If
		$\what{\Lambda(\Delta)} \subseteq \what{\F}$, then the following statements
		hold:
		\begin{enumerate}[(i),nosep]
			\item $A(\Lambda(\Delta_{1,x}))  \subseteq \F_x$ for $x\in\M$;
			\item $\nabla_{X_0}(\mathcal G^r_{\Lambda(\F)}) \subseteq \mathcal{G}^r_{\Delta_1}$.
		\end{enumerate}
		If\/ $\what{\Lambda(\Delta)}\cap\what{\F}\not=\emptyset$\@, then the converse
		is true in the real analytic case and if $\F$ is a subbundle in the smooth
		case.
	\end{proposition}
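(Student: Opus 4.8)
The plan is to transport the statement to the extended bundle $\E\oplus\bbR_\M$, apply there the characterisation of flow-invariance provided by Proposition~\ref{prop:cogenflow}, and then decode the output in terms of $\Delta_1$ and $\F$ by a short linear-algebra computation that invokes the containment hypothesis.

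The first ingredient is the algebraic identity that drives everything: for $(\mu,c)\in\E^*_x\oplus\bbR$ one has $\what{(A,b)}^*(\mu,c)=(A^*\mu,\langle\mu,b\rangle)$, so $\what{\Xaff}^*=X_0^{\mathrm{h}}-\what{(A,b)}^{*,\mathrm{e}}$ is a genuine linear vector field on $\E^*\oplus\bbR_\M$ over $X_0$, its horizontal part being taken relative to the connection dual to $\what{\nabla}$ (whose $\E^*$-component is the dual connection on $\E^*$). By the hypothesis and the preceding proposition, $\A=\A(\Delta)$ is flow-invariant under $\Xaff$, equivalently $\Delta$, regarded as a generalized subbundle of $\E^*\oplus\bbR_\M$, is flow-invariant under $\what{\Xaff}^*$. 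I would then apply the generalized-subbundle form of Proposition~\ref{prop:cogenflow} (the variant noted immediately after its statement) to the pair $(\Delta,\what{\Xaff}^*)$. Since $\Delta$ is a generalized subbundle, the forward implication carries no regularity hypotheses, and it yields: (a)~$\what{(A,b)}^*(x)(\Delta_x)\subseteq\Delta_x$ for every $x\in\M$; and (b)~$\nabla_{X_0}$ maps $\mathcal{G}^r_\Delta$ into itself, where $\nabla_{X_0}$ denotes the dual connection on $\E^*\oplus\bbR_\M$.

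Next I would decode (a) and (b). Projecting the inclusion in (a) onto $\E^*_x$ via the identity above gives $A^*(\Delta_{1,x})\subseteq\Delta_{1,x}$, and dualising gives $A(\Lambda(\Delta_{1,x}))\subseteq\Lambda(\Delta_{1,x})$ for all $x$. For (b), over the base variety $\sS(\A)$ the projection $\Delta_x\to\Delta_{1,x}$ is injective---a defining subbundle contains no covector $(0,c)$ with $c\neq0$ over $\sS(\A)$, since such a covector would make $\A_x$ empty---so a local $\C^r$ section $\lambda$ of $\Delta_1$ lifts canonically to the section $\mu\mapsto(\mu,-\langle\mu,e_0\rangle)$ of $\Delta$, where $e_0$ is any local $\C^r$ section of $\A$, and applying (b) to this lift shows its first component $\nabla_{X_0}\lambda$ is again a section of $\Delta_1$. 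Finally I would bring in the containment: $\what{\Lambda(\Delta)}\subseteq\what{\F}$ is equivalent to $\A\subseteq\F$, hence to $\A_x\subseteq\F_x$ for $x\in\sS(\A)$; taking linear parts of this affine fibre gives $\Lambda(\Delta_{1,x})\subseteq\F_x$, and taking annihilators in $\E^*_x$ gives $\Lambda(\F)_x\subseteq\Delta_{1,x}$. Combining these, $A(\Lambda(\Delta_{1,x}))\subseteq\Lambda(\Delta_{1,x})\subseteq\F_x$ yields (i), and every local section of $\Lambda(\F)$ is a local section of $\Delta_1$, so its $\nabla_{X_0}$-derivative lies in $\Delta_1$, which yields (ii).

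So far (i) and (ii) hold on the open dense set of regular points lying over $\sS(\A)$, and propagating them to all of $\M$ is the step I expect to be the main obstacle. As in the proof of Proposition~\ref{prop:cogenflow}, both conditions are closed and are cut out by the vanishing of $\C^r$ data, so they extend to the closure of that set; in the real-analytic case this closure is $\M$, and in the smooth case the subbundle hypothesis on $\F$ serves the same role it does there. For the converse I would assume (i), (ii) and $\what{\Lambda(\Delta)}\cap\what{\F}\neq\emptyset$, build a local $\C^r$ section $\xi_0$ of $\A$ through a point of the intersection (using regularity of $\F$ and $\Delta$ near that point, with analyticity or the subbundle hypothesis to propagate), verify the three conditions of Proposition~\ref{prop:cogenflow} for the cogeneralized affine subbundle $\xi_0+\Lambda(\Delta_1)$, and apply its converse to deduce that $\A$---equivalently $\Delta$---is flow-invariant, the containment $\A\subseteq\F$ being recovered from the nonempty intersection together with (i) and (ii) by the same propagation.
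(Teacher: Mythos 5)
Your reduction is the natural one and, as far as the core mechanism goes, it is surely the intended one: the paper itself states Proposition~\ref{prop:defnsub} without proof (it is adapted from \autocite[Remark~4.26]{ADL:20a}), and the route through the extended bundle---flow-invariance of $\Delta$ under $\what{\Xaff}^*$, the generalized-subbundle version of Proposition~\ref{prop:cogenflow}, the identity $\what{(A,b)}^*(\mu,c)=(A^*\mu,\langle\mu,b\rangle)$, and the fibrewise containments $\Lambda(\Delta_{1,x})\subseteq\F_x$ and $\Lambda(\F)_x\subseteq\Delta_{1,x}$ extracted from $\what{\Lambda(\Delta)}\subseteq\what{\F}$---is exactly how (i) and (ii) arise. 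The problem is the step you flag yourself. Your decoding has content only over the base variety $\sS(\A(\Delta))$ (and your lift $\lambda\mapsto(\lambda,-\langle\lambda,e_0\rangle)$ needs a local $\C^r$-section $e_0$ of $\A(\Delta)$ defined on a neighbourhood in $\M$, hence interior points of $\sS(\A(\Delta))$), and the proposed propagation to all of $\M$ does not work: the analyticity/subbundle hypotheses are attached only to the converse, $\sS(\A(\Delta))$ need not be dense---it can be a proper closed subset, even empty---and (ii) takes values in the generalized subbundle $\Delta_1$, whose fibres (and whose annihilator fibres, which enter (i)) jump at singular points, so neither condition is a closed condition that extends to a closure. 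Indeed no argument can close this from the stated hypotheses alone: the null defining subbundle $\Delta=\{0\}\oplus\bbR_\M$ (the annihilator of the $\what{\Xaff}$-invariant subbundle $\E\oplus\{0\}$) is flow-invariant under $\what{\Xaff}^*$ for every $\Xaff$ and has $\what{\Lambda(\Delta)}=\emptyset$, so the containment hypothesis is vacuous; yet then $\Delta_1=0$, so (i) would ask $A(\E_x)\subseteq\F_x$ and (ii) would force every section of $\Lambda(\F)$ to be $\nabla_{X_0}$-parallel, which generally fail. So what your forward argument actually establishes is (i)--(ii) over $\sS(\A(\Delta))$ (i.e., under the reading in which $\what{\Lambda(\Delta)}\not=\emptyset$, as in Definition~\ref{def:adm}); it cannot be patched by a density argument.

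The converse is also not really addressed. Since flow-invariance of $\Delta$ is a standing hypothesis of the proposition, the content of the converse is: given flow-invariance, (i), (ii) and $\what{\Lambda(\Delta)}\cap\what{\F}\not=\emptyset$, conclude $\what{\Lambda(\Delta)}\subseteq\what{\F}$, i.e., $\A(\Delta)\subseteq\F$ (this is how the result is used in the remark following it and in Definition~\ref{def:adm}). Your sketch instead re-derives flow-invariance of $\xi_0+\Lambda(\Delta_1)$ from Proposition~\ref{prop:cogenflow}, which is already assumed, and defers the containment to ``the same propagation.'' But the containment is precisely where the work and the regularity hypotheses lie: (i) gives $A(\Lambda(\Delta_{1,x}))\subseteq\F_x$, not $\Lambda(\Delta_{1,x})\subseteq\F_x$, so the containment cannot be read off algebraically; one must propagate the vanishing on $\A(\Delta)$ of the functions $\lambda^{\mathrm{e}}$, $\lambda\in\Gamma^r(\Lambda(\F))$, away from the nonempty intersection using the flow of $\Xaff$ together with analyticity (or the subbundle hypothesis on $\F$ in the smooth case). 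That argument is absent from your proposal.
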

	
	\begin{remark}
		In particular, if $\Delta$ and $\F$ satisfy the hypothesis of
		Proposition~\ref{prop:defnsub} (including the hypothesis that
		$\what{\Lambda(\Delta)}\cap\what{\F}\not=\emptyset$), then $\A(\Delta)$ is
		flow-invariant and contained in $\F$.
	\end{remark}
	
	Following the previous characterization, we have the following definition for
	defining subbundles for which the associated affine subbundle variety is
	flow-invariant and contained in $\F$.
	\begin{definition}\label{def:adm}
		A defining subbundle $\Delta$ is \emph{$(\Xaff,\F)$-linearly admissible} if
		it is flow-invariant under $\Xaff$ and the following conditions hold:
		\begin{enumerate}[(i),nosep]
			\item $A(\Lambda(\Delta_{1,x}))  \subseteq \F_x$ for $x\in\M$;
			\item $\nabla_{X_0}(\mathcal G^r_{\Lambda(\F)}) \subseteq \mathcal{G}^r_{\Delta_1}$.
		\end{enumerate}
		If, additionally,\/ $\what{\Lambda(\Delta)}\not=\emptyset$\@, then we say
		that $\Delta$ is \emph{$(\Xaff,\F)$-admissible}\@.
	\end{definition}
\end{paragr}

One of the objectives of the paper is to find a constructive means to find
flow-invariant affine subbundle varieties.


\section{Nonholonomic and constrained variational
	mechanics}\label{sec:mechanics}

In this section we give the governing differential equations for the two
types of mechanics we are comparing, nonholonomic mechanics and constrained
variational mechanics.  We only consider mechanical systems with ``kinetic
minus potential energy'' Lagrangians.  These are certainly of significant
interest, and also have enough structure to be able to draw deeper
conclusions that one will be able to draw with general Lagrangians.  The
equations we present here are an evolution presented in~\autocite{ADL:20a}\@,
based on the work in~\autocite{IK/WMO:01}\@.

We begin with the definition of the mechanical systems we consider.
\begin{definition}
	A \emph{$\C^r$-constrained (simple) mechanical system} is a quadruple
	$(\M,\G,V,\D)$ of a $\C^r$-manifold $\M$, representing the configuration
	manifold, a $\C^r$-Riemannian metric $\G$ representing the kinetic energy, a
	$\C^r$-potential function $V$ representing the potential energy, and a
	$\C^r$-distribution representing the constraints.
\end{definition}

Now we consider the two different equations of motion.  Note that, in the
case of constrained variational mechanics we have \emph{two} cases for the
equations of motion, corresponding to normal (called ``regular'' in the
definition) and abnormal (called ``singular'' in the definition) extremals.
The connection between these definitions and variational formulations is
established by~\autocite[Theorems~5.18 and~5.22]{ADL:20a}\@.
\begin{definition}
	Let $I\subset\mathbb{R}$ be an interval and let $\gamma\colon I\to\M$ be an
	absolutely continuous curve such that $\gamma'(t)\in\D_{\gamma(t)}$ a.e.\ on
	$I$\@.
	\begin{enumerate}[(i),nosep]
		\item The curve $\gamma$ is a \emph{nonholonomic trajectory} if it is $\C^1$
		with an absolutely continuous derivative and
		\begin{equation}\label{eq:nh}\tag{NH}
			\stackrel{\mathclap{\scriptscriptstyle{\D}}}{\nabla}_{\gamma'} \gamma'
			+ P_\D\circ \mathrm{grad}\ V \circ \gamma = 0.
		\end{equation}			
		
		\item The curve $\gamma$ is a \emph{$\D$-regular constrained variational
			trajectory} if it is $\C^1$ with an absolutely continuous derivative and if
		there exists an absolutely continuous curve $\lambda\colon I\to\D^\perp$ over
		$\gamma$ such that
		\begin{equation}\label{eq:dreg}\tag{RCV}
			\begin{split}
				&\stackrel{\mathclap{\scriptscriptstyle{\D}}}{\nabla}_{\gamma'} \gamma'
				+P_\D\circ \mathrm{grad}\ V \circ \gamma=F_\D^*(\gamma')(\lambda),\\
				&\nablaDns_{\gamma'}\lambda = 
				\frac{1}{2}G_\D(\gamma',\gamma')+ P_{\D^\perp}\circ \mathrm{grad}\ V \circ \gamma +
				\frac{1}{2}G^\star_{\D^\perp}(\gamma')
				(\lambda) + \frac{1}{2}F^\star_{\D^\perp}(\gamma')(\lambda).
			\end{split}
		\end{equation}
		
		\item The curve $\gamma$ is a \emph{$\D$-singular constrained variational
			trajectory} if there exists a nowhere zero, absolutely continuous curve
		$\lambda\colon I\to\D^\perp$ over $\gamma$ such that
		\begin{equation}\label{eq:dsing}\tag{SCV}
			\begin{split}
				&F_\D^*(\gamma')(\lambda)=0,\\
				&\nablaDns_{\gamma'}\lambda = \frac{1}{2}G^\star_{\D^\perp}(\gamma')
				(\lambda) + \frac{1}{2}F^\star_{\D^\perp}(\gamma')(\lambda).
			\end{split}
		\end{equation}
	\end{enumerate}
	The curve $\lambda$ over $\gamma$ is the \emph{adjoint field}\@.
\end{definition}

We are primarily interested in this paper with comparing nonholonomic
trajectories with $\D$-regular constrained variational trajectories.  Note
that the first of equations~\eqref{eq:dreg} agrees with the
equation~\eqref{eq:nh} on the left-hand side, while the former has a term
involving the Frobenius curvature on the right-hand side.  Essentially, we
wish to consider under what initial conditions for the second of
equations~\eqref{eq:dreg} will the term on the right in the first equation
vanish.  Indeed, in such cases, the corresponding $\D$-regular constrained
variational trajectory will also be a nonholonomic trajectory.

To develop the framework for this, we pull back the second of the
equations~\eqref{eq:dreg} to $\D$ to get a vector field over the vector field
defining the nonholonomic trajectories.  We let $\pi_\D\colon\D\to\M$ be the
restriction of the tangent bundle projection.  We let $\pi_\D^*\D$ and
$\pi_\D^*\D^\perp$ denote the pull-back bundles.  We use
$\pi_\D\colon\D\to\M$ to pull back the tensor fields $F^*_\D$,
$F^\star_{\D^\perp}$, and $G^\star_{\D^\perp}$ to get
\begin{equation*}
	\begin{split}
		\hat{F}^*_\D: &\pi_\D^*\D^\perp \to \pi_\D^*\D\\
		&(v_x,\alpha_x)\mapsto (v_x,F_\D^*(v_x)(\alpha_x)),\\
		\hat{F}^\star_{\D^\perp}: &\pi_\D^*\D^\perp \to \pi_\D^*\D^\perp\\
		&(v_x,\alpha_x)\mapsto (v_x,F_{\D^\perp}^\star(v_x)(\alpha_x)),\\
		\hat{G}^\star_{\D^\perp}: &\pi_\D^*\D^\perp \to \pi_\D^*\D^\perp\\
		&(v_x,\alpha_x)\mapsto (v_x,G_{\D^\perp}^\star(v_x)(\alpha_x)).
	\end{split}
\end{equation*}
We also pull back the connection
$\stackrel{\mathclap{\scriptscriptstyle{\D^\perp}}}{\nabla}$ to get the
connection $\nablaD$ in $ \pi_\D^*\pi_{\D^\perp}\colon\pi_\D^*\D^\perp \to\D$
defined by
\begin{equation*}
	\nablaD_\omega\pi_\D^*\alpha = (v, \nablaDns_{T_v\pi_\D(\omega)}\alpha),
	\quad v\in\D,\omega\in\T_v\D,\ \alpha\in\Gamma^r(\D^\perp).
\end{equation*}

Denote by $\Xnh\in\Gamma^r(\mathsf{T}\D)$ the vector field whose integral
curves are nonholonomic trajectories.  Define
$b_\D\in\Gamma^r(\pi^*_\D\D^\perp)$ and
$A_\D\in\Gamma^r(\mathrm{End}(\pi^*_\D\D^\perp))$ by
\begin{equation}\label{eq:beq}
	b_\D(v_x) = \left( v_x,\frac{1}{2}G_\D(v_x,v_x) +
	P_{\D^\perp}\circ\mathrm{grad}\ V\right),\quad v_x\in\D,
\end{equation}
and
\begin{equation}\label{eq:Aeq}
	A_\D(v_x,\alpha_x) =
	\frac{1}{2}\hat{G}^\star_{D^\perp}(v_x,\alpha_x)
	+\frac{1}{2}\hat{F}^\star_{D^\perp}(v_x,\alpha_x),\quad 
	v_x\in\D, (v_x,\alpha_x)\in\pi^*_\D\D^\perp.
\end{equation}
Then set
\begin{equation}\label{eq:Xsingreg}
	\begin{aligned}
		X^{\mathrm{sing}}_\D =&\;(X^{\mathrm{nh}}_\D)^{\mathrm{h}}+A_\D^{\mathrm{e}},\\
		X^{\mathrm{reg}}_\D=&\;(X^{\mathrm{nh}}_\D)^{\mathrm{h}}+A_\D^{\mathrm{e}}+
		b_\D^{\mathrm{v}},
	\end{aligned}
\end{equation}
where $(X^{\mathrm{nh}}_\D)^{\mathrm{h}}\in\Gamma^r(\T(\pi^*_\D\D^\perp))$ is
the horizontal lift of $X^{\mathrm{nh}}_\D$ by $\nablaD$.

Suppose that we have a curve $\gamma$ satisfying (\ref{eq:nh}) and let
$\Upsilon=\gamma'$.  Let $\hat{\lambda}\colon I\to \pi^*_\D\D^\perp$ be a
curve over $\Upsilon$ and write $\hat{\lambda}=(\Upsilon,\lambda)$.  The
second of the equations in~\eqref{eq:dreg} and~\eqref{eq:dsing} for the
adjoint field $\lambda$ can be written as
\begin{equation}\label{eq:affsys}
	\nablaD_{\Upsilon'}\hat{\lambda} = A_\D\circ\hat{\lambda} + b_\D\circ\Upsilon
	\quad\mbox{and}\quad
	\nablaD_{\Upsilon'}\hat{\lambda} = A_\D\circ\hat{\lambda},
\end{equation}
respectively.  Together with~\eqref{eq:nh}, these equations are equivalent to
$\hat{\lambda}$ being an integral curve for $X^{\mathrm{sing}}_\D$ or
$X^{\mathrm{reg}}_\D$, respectively, that projects to the nonholonomic
trajectory $\gamma$\@.  By examining equations~\eqref{eq:affsys}, we see that
they resemble an affine and a linear differential equation, respectively.  We
can also make the following observations.
\begin{enumerate}[1.,nosep]
	\item If $\hat{\lambda}(t)\in \ker(\hat{F}^*_\D)_{\Upsilon(t)}$ \rtxt{(the fiber 
		of $\ker(\hat{F}^*_\D)$ at $\Upsilon(t)$)} and
	$\hat{\lambda}$ is an integral curve of $X_\D^{\mathrm{reg}}$\@, then
	$\Upsilon$ is an integral curve of $X_\D^{\mathrm{nh}}$, and $\gamma$ and
	$\lambda$ satisfy~\eqref{eq:dreg}\@.
	\item If $\lambda(t)\neq 0$ for all $t\in I$,
	$\hat{\lambda}(t)\in\ker(\hat{F}^*_\D)_{\Upsilon(t)}$, and $\hat{\lambda}$ is
	an integral curve of $X_\D^{\mathrm{sing}}$ then $\Upsilon$ is an integral
	curve of $X_\D^{\mathrm{nh}}$, and $\gamma$ and $\lambda$ satisfy~\eqref{eq:dsing}.
\end{enumerate}

The following two results align with these
observations~\autocite[Theorems~7.8 and~7.9]{ADL:20a}. Indeed, we are looking
for flow-invariant cogeneralized subbundles and affine subbundle varieties
which are contained in $\ker(\hat{F}^*_\D)$\@.
\begin{theorem}\label{the:reg}
	Suppose that either (a)~$r=\omega$ and\/ $X_\D^{\mathrm{nh}}$ is complete or
	(b)~$r=\infty$ and\/ $\ker(\hat{F}_\D^*)$ is a subbundle. Then the following
	statements are equivalent:
	\begin{enumerate}[(i),nosep]
		\item some (resp.~all) nonholonomic trajectories are $\D$-regular constrained
		variational trajectories;
		\item there exists a partial (resp.~total)
		$(X^{\mathrm{reg}}_\D, \ker(\hat{F}^*_\D))$-admissible $\C^r$-defining
		subbundle $\Delta \subseteq (\pi^*_\D\D^\perp)^*\oplus\mathbb{R}_\D$.
	\end{enumerate}
\end{theorem}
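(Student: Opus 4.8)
The plan is to derive Theorem~\ref{the:reg} from the earlier structural results, primarily Proposition~\ref{prop:defnsub} (and its associated definition of $(\Xaff,\F)$-admissibility) together with the observations (items~1 and~2) preceding the statement, which already reduce the dynamical question to a question about invariant affine subbundle varieties contained in $\ker(\hat{F}^*_\D)$. The key point is that the vector field $X^{\mathrm{reg}}_\D = (X^{\mathrm{nh}}_\D)^{\mathrm{h}} + A_\D^{\mathrm{e}} + b_\D^{\mathrm{v}}$ on $\pi^*_\D\D^\perp$ is an affine vector field over $X^{\mathrm{nh}}_\D$ in exactly the sense of Section~\ref{sec:prelim}, and that $\ker(\hat{F}^*_\D)$ is a cogeneralized subbundle of $\pi^*_\D\D^\perp$ (being the kernel of a vector bundle map, hence defined fibrewise by linear equations; its annihilator is generated by the rows of $\hat{F}^*_\D$). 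So the machinery of Sections~\ref{subsec:cogen} and~\ref{subsec:affvars} applies verbatim with $\E = \pi^*_\D\D^\perp$, $\M$ replaced by $\D$, $\Xaff = X^{\mathrm{reg}}_\D$, and $\F = \ker(\hat{F}^*_\D)$.

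\textbf{(ii) $\Rightarrow$ (i).} Suppose $\Delta$ is a partial (resp.\ total) $(X^{\mathrm{reg}}_\D,\ker(\hat{F}^*_\D))$-admissible defining subbundle. Admissibility includes $\what{\Lambda(\Delta)}\neq\emptyset$, so by the Remark following Proposition~\ref{prop:defnsub} the affine subbundle variety $\A(\Delta)$ is flow-invariant under $X^{\mathrm{reg}}_\D$ and contained in $\ker(\hat{F}^*_\D)$. Its base variety $\sS(\A(\Delta))$ is a nonempty subset of $\D$ (all of $\D$ in the total case) consisting of points through which there is an integral curve $\hat\lambda$ of $X^{\mathrm{reg}}_\D$ lying entirely in $\ker(\hat{F}^*_\D)$; projecting by $\pi^*_\D\pi_{\D^\perp}$ and then by $\pi_\D$, and invoking observation~1, the underlying base curve $\gamma$ is then simultaneously a nonholonomic trajectory and a $\D$-regular constrained variational trajectory (with adjoint field the $\D^\perp$-component of $\hat\lambda$). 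This gives ``some'' in the partial case. In the total case one must argue that \emph{every} nonholonomic trajectory arises this way: given any such $\gamma$ with $\Upsilon = \gamma'$, pick any point $v$ of $\D$ on the curve $\Upsilon$, and since $\sS(\A(\Delta)) = \D$ there is a point of $\A(\Delta)$ over $v$; the integral curve of $X^{\mathrm{reg}}_\D$ through that point stays in $\A(\Delta)\subseteq\ker(\hat{F}^*_\D)$ by flow-invariance, projects to $\Upsilon$ by uniqueness of integral curves of $X^{\mathrm{nh}}_\D$ (here the completeness hypothesis in case~(a), or the subbundle hypothesis in case~(b) guaranteeing the relevant existence/uniqueness on the maximal interval, is used), and hence exhibits $\gamma$ as a $\D$-regular constrained variational trajectory.

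\textbf{(i) $\Rightarrow$ (ii).} Conversely, suppose some nonholonomic trajectory is a $\D$-regular constrained variational trajectory. By observation~1 read in reverse, there is an integral curve $\hat\lambda$ of $X^{\mathrm{reg}}_\D$ lying in $\ker(\hat{F}^*_\D)$ and projecting to a nonholonomic trajectory. The idea is to take $\A$ to be the smallest flow-invariant affine subbundle variety containing the image of $\hat\lambda$ and contained in $\ker(\hat{F}^*_\D)$ — equivalently, to work with its defining subbundle $\Delta$, for which the existence of even one such $\hat\lambda$ forces $\what{\Lambda(\Delta)}\cap\what{\ker(\hat{F}^*_\D)}\neq\emptyset$. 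One then invokes the converse direction of Proposition~\ref{prop:defnsub} (valid because $\F = \ker(\hat{F}^*_\D)$ is a subbundle under hypothesis~(b), or because $r=\omega$ under hypothesis~(a)) to conclude that this $\Delta$ is $(X^{\mathrm{reg}}_\D,\ker(\hat{F}^*_\D))$-admissible, hence partial (its base variety is nonempty but need not be all of $\D$). In the ``all'' case, the base variety of the union of the relevant $\Delta$'s over all nonholonomic trajectories is all of $\D$, and one shows this union is again a defining subbundle that is total and admissible.

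\textbf{The main obstacle} I anticipate is the passage between invariance on $\D$ (as the base) versus on $\M$, and keeping straight which regularity/completeness hypothesis is doing what: hypothesis~(a) ($r=\omega$, $X^{\mathrm{nh}}_\D$ complete) is needed both for the real-analytic converse in Proposition~\ref{prop:defnsub} and to ensure integral curves are defined long enough that flow-invariance on the whole curve is meaningful; hypothesis~(b) ($r=\infty$, $\ker(\hat{F}^*_\D)$ a subbundle) replaces the real-analyticity by the ``$\F$ is a subbundle in the smooth case'' clause of the same proposition. A secondary subtlety is the ``some vs.\ all'' / ``partial vs.\ total'' bookkeeping: one must verify that a union of partial defining subbundles covering all of $\D$ assembles into a single total defining subbundle without losing admissibility, which uses that admissibility conditions~(i)--(ii) of Definition~\ref{def:adm} are pointwise/local and that flow-invariance is preserved under the relevant sheaf-theoretic operations — this is where I would spend the most care, leaning on the corresponding assembly arguments already established in \autocite{ADL:20a}.
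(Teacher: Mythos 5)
You should first note that the paper does not prove Theorem~\ref{the:reg} at all: it is quoted from \autocite{ADL:20a} (Theorems~7.8 and~7.9), so the comparison is with the intended argument there, whose frame your proposal captures correctly\textemdash{}reduce, via the two observations before the theorem, to the existence of a flow-invariant affine subbundle variety contained in $\ker(\hat{F}_\D^*)$, and then translate through the admissibility machinery. Your direction (ii)$\Rightarrow$(i) is essentially sound, with one point to tighten: Definition~\ref{def:adm} only gives $\what{\Lambda(\Delta)}\neq\emptyset$, whereas the Remark after Proposition~\ref{prop:defnsub} needs $\what{\Lambda(\Delta)}\cap\what{\F}\neq\emptyset$ to invoke the converse and conclude $\A(\Delta)\subseteq\ker(\hat{F}_\D^*)$; you pass from one to the other without comment, and under hypotheses (a)/(b) this is exactly the step that must be justified before the projected curves can be claimed to satisfy~\eqref{eq:dreg}.

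The genuine gap is in (i)$\Rightarrow$(ii). First, the ``smallest flow-invariant affine subbundle variety containing the image of $\hat\lambda$ and contained in $\ker(\hat{F}_\D^*)$'' is not known to exist: nothing established in the paper (or cited from \autocite{ADL:20a}) produces a smallest such object, the family you would intersect could a priori be empty, and even if a fibrewise-affine invariant set exists you have given no reason its defining subbundle is $\C^r$, i.e.\ locally finitely generated by $\C^r$ sections\textemdash{}this is precisely where $r=\omega$ (with completeness of $X_\D^{\mathrm{nh}}$) or the subbundle hypothesis must do real work. Second, your assembly step for the ``all'' case fails as stated: a union of defining subbundles is not a generalized subbundle, and a union of affine subbundle varieties is not an affine subbundle variety, since unions of (affine) subspaces are not (affine) subspaces. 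The route consistent with the rest of the paper is to use the single canonical object: the \emph{largest} $X_\D^{\mathrm{reg}}$-invariant affine subbundle variety contained in $\ker(\hat{F}_\D^*)$, whose existence is cited from \autocite{ADL:20a} in Section~\ref{sec:invariance}, or equivalently the intersection $\bigcap_{k}\bigcap_{\lambda}(\mathscr{L}^k_{X^{\mathrm{reg}}_\D}\lambda^{\mathrm{e}})^{-1}(0)$ of Proposition~\ref{prop:compute-invbun}. Any integral curve of $X^{\mathrm{reg}}_\D$ remaining in $\ker(\hat{F}_\D^*)$ lies in this set automatically (the functions $\lambda^{\mathrm{e}}$ vanish identically along it, hence so do all their Lie derivatives), so ``some'' yields a nonempty base variety and ``all'' yields a total one, with fibrewise affineness automatic; hypotheses (a)/(b) then guarantee that this object is a genuine $\C^r$ affine subbundle variety with an admissible defining subbundle, which is the content your construction would need but does not supply.
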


\begin{theorem}\label{the:sing}
	Suppose that either (a)~$r=\omega$ and\/ $X_\D^{\mathrm{nh}}$ is complete or
	(b)~$r=\infty$ and\/ $\ker(\hat{F}_\D^*)$ is a subbundle.  Then the following
	are equivalent:
	\begin{enumerate}[(i),nosep]
		\item some nonholonomic trajectories are $\D$-singular constrained
		variational trajectories;
		\item there exists a flow-invariant cogeneralized subbundle under
		$X^{\mathrm{sing}}_\D$ that is contained in $\ker(\hat{F}^*_\D)$\@.
	\end{enumerate}
\end{theorem}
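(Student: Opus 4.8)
The plan is to translate statement~(i) into the existence of a nowhere-zero integral curve of $X^{\mathrm{sing}}_\D$ lying in $\ker(\hat{F}^*_\D)$, to dispatch the implication (ii)$\Rightarrow$(i) from a single orbit, and to prove (i)$\Rightarrow$(ii) by intersecting all flow-translates of $\ker(\hat{F}^*_\D)$. For the dictionary: the vector field $X^{\mathrm{sing}}_\D=(X^{\mathrm{nh}}_\D)^{\mathrm{h}}+A^{\mathrm{e}}_\D$ is linear over $X^{\mathrm{nh}}_\D$, so its flow covers the flow of $X^{\mathrm{nh}}_\D$ and acts on fibres by linear isomorphisms; in particular, if $X^{\mathrm{nh}}_\D$ is complete then so is $X^{\mathrm{sing}}_\D$, since the fibre equation $\nablaD_{\Upsilon'}\hat{\lambda}=A_\D\circ\hat{\lambda}$ from~\eqref{eq:affsys} is linear along each complete base integral curve. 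Unwinding the definitions of $A_\D$ and $\nablaD$, a curve $\hat{\lambda}=(\Upsilon,\lambda)$ is an integral curve of $X^{\mathrm{sing}}_\D$ precisely when $\Upsilon=\gamma'$ for a nonholonomic trajectory $\gamma$ and $\lambda$ satisfies the second of~\eqref{eq:dsing}, and $\hat{\lambda}$ takes values in $\ker(\hat{F}^*_\D)$ precisely when the first of~\eqref{eq:dsing} holds. Since $X^{\mathrm{sing}}_\D$ is linear, an integral curve is either identically zero or nowhere zero, so (i) is equivalent to the existence of a nowhere-zero integral curve of $X^{\mathrm{sing}}_\D$ contained in $\ker(\hat{F}^*_\D)$.

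\emph{The easy implication (ii)$\Rightarrow$(i).} Given a flow-invariant cogeneralized subbundle $\scrL\subseteq\ker(\hat{F}^*_\D)$ of positive rank, Remark~\ref{rem:dense} supplies a regular point $v_0\in\D$ with $\scrL_{v_0}\neq\{0\}$; for $0\neq w_0\in\scrL_{v_0}$ the maximal integral curve $\hat{\lambda}=\Fl^{X^{\mathrm{sing}}_\D}_\bullet(w_0)$ stays in $\scrL$ by flow-invariance, hence in $\ker(\hat{F}^*_\D)$, and is nowhere zero because the flow is fibrewise a linear isomorphism; by the dictionary it yields a nonholonomic trajectory that is a $\D$-singular constrained variational trajectory.

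\emph{The hard implication (i)$\Rightarrow$(ii).} Given a nowhere-zero integral curve $\hat{\lambda}\colon I\to\ker(\hat{F}^*_\D)$ of $X^{\mathrm{sing}}_\D$, write $\Phi_t=\Fl^{X^{\mathrm{sing}}_\D}_t$ and set
\begin{equation*}
	\scrL=\bigcap_{t}\Phi_{-t}\bigl(\ker(\hat{F}^*_\D)\bigr),
\end{equation*}
the intersection being over all $t$ for which $\Phi_{-t}$ is defined at the point in question (all $t\in\mathbb{R}$ in case~(a)). Each $\Phi_{-t}(\ker(\hat{F}^*_\D))$ is again a cogeneralized subbundle since $\Phi_t$ covers a diffeomorphism and is fibrewise a linear isomorphism, and a subbundle of the same rank in case~(b); $\scrL$ is flow-invariant by the group law for $\Phi$, and $\scrL\subseteq\ker(\hat{F}^*_\D)$ by taking $t=0$. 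For non-triviality: in case~(a) the real-analytic curve $t\mapsto\hat{F}^*_\D(\hat{\lambda}(t))$ vanishes on the interval $I$, hence vanishes on the entire (now complete) maximal integral curve by the identity theorem, so $\hat{\lambda}(0)\in\scrL$ and $\hat{\lambda}(0)\neq 0$; in case~(b) one checks directly that $\hat{\lambda}(0)$ survives the intersection. Finally, $\scrL$ must be shown to be a cogeneralized subbundle: in the real-analytic case this follows from coherence (Noetherianity of the stalks) of the sheaf of real-analytic functions, which makes an arbitrary intersection of cogeneralized subbundles again a cogeneralized subbundle.

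\emph{The obstacle.} The delicate point is precisely this last one in the smooth case~(b): coherence is not available, and even a finite intersection of smooth subbundles need not be a (co)generalized subbundle, so the hypothesis that $\ker(\hat{F}^*_\D)$ is a subbundle must carry real weight. The remedy I would pursue is to replace the naive intersection by the largest $X^{\mathrm{sing}}_\D$-invariant cogeneralized subbundle contained in $\ker(\hat{F}^*_\D)$, obtained from a locally stabilizing iterative construction of the kind developed in Section~\ref{sec:invariance} (or a Sussmann-type argument), and then to verify that the orbit of $\hat{\lambda}$ lands inside it. A minor companion issue is to pin down the non-triviality convention so that the zero section is excluded from statement~(ii) and the equivalence does not become vacuous.
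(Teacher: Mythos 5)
A preliminary remark on the comparison: the paper does not prove Theorem~\ref{the:sing} at all\textemdash{}it is imported verbatim from \autocite{ADL:20a} (Theorems~7.8 and~7.9)\textemdash{}so your argument can only be judged on its own merits, not against an internal proof. On those merits, your dictionary is correct: integral curves of the linear vector field $X_\D^{\mathrm{sing}}$ that take values in $\ker(\hat{F}_\D^*)$ correspond exactly to pairs $(\gamma,\lambda)$ satisfying \eqref{eq:dsing} over a nonholonomic trajectory, and linearity makes ``nowhere zero'' equivalent to ``nonzero at one instant.'' The implication (ii)$\Rightarrow$(i) is then sound, granted the (surely intended, but unstated) convention that the cogeneralized subbundle in (ii) is not the zero section; you are right to flag that without this convention the equivalence is vacuous. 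In the analytic case (a), the scheme $\bigcap_t\Fl^{X_\D^{\mathrm{sing}}}_{-t}\bigl(\ker(\hat{F}_\D^*)\bigr)$, completeness, and the identity theorem is a plausible route, but the key assertion\textemdash{}that an arbitrary (uncountable) intersection of cogeneralized subbundles is again a cogeneralized subbundle\textemdash{}is exactly where the coherence/finite-generation machinery of \autocite{ADL:20a} does the work, and in your write-up it is invoked rather than proved.

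The genuine gap is case (b), which you acknowledge but do not close, and it is not a minor loose end: it is the half of the theorem that the subbundle hypothesis on $\ker(\hat{F}_\D^*)$ exists to serve. Two distinct things are missing. First, the set $\bigcap_t\Fl_{-t}\bigl(\ker(\hat{F}_\D^*)\bigr)$ need not be a cogeneralized subbundle in the smooth category (even finite intersections of smooth subbundles can fail to admit local smooth generators), and the proposed remedy\textemdash{}``the largest $X_\D^{\mathrm{sing}}$-invariant cogeneralized subbundle contained in $\ker(\hat{F}_\D^*)$''\textemdash{}is named but not constructed; the iterative constructions of Section~\ref{sec:invariance} presuppose a stabilization hypothesis (the integer $N$ in Proposition~\ref{prop:compute-invbun}) and are formulated for affine subbundle varieties, so they cannot simply be cited here. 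Second, and more basic, nontriviality is unjustified in the smooth case: hypothesis (i) only gives $\hat{\lambda}(t)\in\ker(\hat{F}_\D^*)$ for $t$ in the interval $I$ on which the singular trajectory is given; without analyticity or completeness nothing forces the maximal integral curve through $\hat{\lambda}(0)$ to remain in $\ker(\hat{F}_\D^*)$, so $\hat{\lambda}(0)$ may fail to survive your intersection and your candidate could collapse to the zero section, proving nothing. So the proposal is a reasonable skeleton for case (a) but does not yet constitute a proof of the theorem as stated.
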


Together with the (flow-)invariance results stated in
Sections~\ref{subsec:cogen} and~\ref{subsec:affvars}\@, these results are of
fundamental importance in understanding questions of when constrained
variational trajectories are also nonholonomic trajectories.

\section{The differential conditions for invariance of (co)generalized
	subbundles}\label{sec:invariance}

As we remarked after the statement of Proposition~\ref{prop:cogenflow}\@, and
as can also be seen in Proposition~\ref{prop:defnsub}\@, the conditions for
flow-invariance of an affine subbundle variety under an affine vector field
involve differential and algebraic conditions. \rtxt{
	The first
	result of this section is concerned with the possibility that, upon differentiation of the
	differential condition, further algebraic conditions may reveal themselves.
	We give conditions under which this is not the case by using the theory of
	linear overdetermined partial differential equations developed
	in~\autocite{HLG:67a} and presented in~\autocite{JFP:78}\@.  The second
	result provides a concrete methodology for constructing an invariant affine
	subbundle variety that should prove useful in applications of the general theory.}

\subsection{Formal integrability}

We let $\pi\colon\E\to\M$ be a $\C^r$-vector bundle and let $\F\subset\E$ be
a subbundle of $\E$\@.  We let $\nabla$ be a linear connection on $\E$ and
consider a linear vector field $\Xlin=X_0^{\mathrm{h}}+A^{\mathrm{e}}$ over
$X_0\in\Gamma^r(\TM)$\@.  We are concerned with the analogue for generalized
subbundles of condition~\ref{pl:cogenflow2} in
Proposition~\ref{prop:cogenflow}\@.  That is to say, we are concerned with
the condition $\nabla_{X_0} (\calG^r_{\F}) \subseteq\calG^r_{\F}$\@.  We let
$P_{\E/\F}\colon\E\to\E/\F$ be the canonical projection, noting that $\E/\F$
is a vector bundle as we are assuming that $\F$ is a subbundle.  (We can just
as well use the projection onto a complement of $\F$ in $\E$\@.)  In this
case, the condition $\nabla_{X_0} (\calG^r_{\F}) \subseteq\calG^r_{\F}$ can
be written as $P_{\F^\perp}\circ\nabla_{X_0}\xi=0$ for
$\xi\in\Gamma^r(\F)$\@.

The next result gives two situations in which we have formal integrability of
this linear partial differential equation in the real analytic case.
\rtxt{For background on formal integrability of PDE, we refer the reader to
	\autocite[Section~4.2]{JFP:78} and \autocite{HLG:67a}.}

\begin{theorem}
	Let $\pi:\E\to\M$ be a real analytic vector bundle with a real analytic
	linear connection $\nabla$ and fiber-wise inner product, let
	$X_0\in{\Gamma^{\omega}({\TM})}$, let $\F\subseteq\E$ be a\/
	$\C^\omega$-subbundle, and let $ \F^\perp$ denote its (orthogonal) complement
	subbundle. Denote the canonical projection by
	$P_{\F^\perp}\colon\E\to \F^\perp$\@, and define a vector bundle map
	$\Phi\colon\mathsf{J}_1\E\to \F^\perp$ by
	$\Phi(j_1\xi(x)) = P_{\F^\perp}(\nabla_{X_0}(\xi(x)))$.  Suppose that
	$\ker(\Phi)$ is a subbundle.  Then $P_{\F^\perp}\circ\nabla_{X_0}$ is
	formally integrable if either of the following conditions holds:
	\begin{enumerate}[(i),nosep]
		\item \label{pl:spencer1} $X_0$ is nowhere vanishing;
		\item \label{pl:spencer2}
		$P_{\F^\perp}\circ\nabla\circ P_{\F^\perp}\circ\nabla \xi \in S^2\TsM \otimes
		\F^\perp$ for every $\xi\in\Gamma^\omega(\E)$ satisfying
		$\Phi\circ j_1\xi=0$\@.
	\end{enumerate}
\end{theorem}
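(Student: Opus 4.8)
The plan is to apply the Cartan--Kähler--Goldschmidt machinery for linear PDE as presented in \autocite[Section~4.2]{JFP:78}: in the real analytic category, a linear PDE operator is formally integrable as soon as its symbol is $2$-acyclic (indeed, involutivity is not even needed; $2$-acyclicity of the symbol together with surjectivity of the prolonged map at first order suffices) and its first prolongation is surjective onto the obvious obstruction bundle. Concretely, let $\mathscr{R}_1 = \ker(\Phi) \subseteq \mathsf{J}_1\E$, which is a subbundle by hypothesis; its symbol $g_1 \subseteq \T^*\M\otimes\E$ is the kernel of the induced map $\sigma(\Phi)\colon \T^*\M\otimes\E \to \F^\perp$, $\alpha\otimes e \mapsto \alpha(X_0)\,P_{\F^\perp}(e)$. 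The first step is to identify $g_1$ and its prolongation $g_2 = (g_1\otimes\T^*\M)\cap(\E\otimes S^2\T^*\M)$ explicitly, and to check in each of the two cases that the Spencer complex
\begin{equation*}
	0 \longrightarrow g_2 \longrightarrow g_1\otimes\T^*\M \longrightarrow \E\otimes\textstyle\bigwedge^2\T^*\M
\end{equation*}
is exact at the middle term (this is precisely $2$-acyclicity of the symbol in the only degree that can obstruct formal integrability of a first-order operator). The second step is to show that the prolonged equation $\mathscr{R}_2 \to \mathscr{R}_1$ is onto, equivalently that the curvature-type obstruction obtained by prolonging $\Phi$ once and projecting lives in the image of the symbol $\sigma(\Phi)$; this is where hypotheses \ref{pl:spencer1} and \ref{pl:spencer2} do their respective jobs.

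For step one, in case \ref{pl:spencer1} the symbol map $\alpha\otimes e\mapsto \alpha(X_0)P_{\F^\perp}(e)$ is, pointwise, the composition of projection $\E\to\F^\perp$ with contraction against the nowhere-zero covector-valued object $X_0$; choosing a local coframe with $X_0$ dual to the first basis vector, $g_1$ is the set of $\sum_i \alpha^i\otimes e_i$ with $e_1\in\F$ and $e_2,\dots,e_n$ arbitrary in $\E$. A direct computation then shows $g_1$ is involutive (it is the symbol of a "Cauchy--Kovalevskaya-type" system: determined in the $X_0$-direction, unconstrained transversally), hence in particular $2$-acyclic, and that $g_2$ projects onto $g_1$, so the prolongation is automatically surjective and we are done. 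In case \ref{pl:spencer2}, $X_0$ may vanish, so the symbol degenerates where $X_0 = 0$; here one does \emph{not} get involutivity for free, but the hypothesis that $P_{\F^\perp}\circ\nabla\circ P_{\F^\perp}\circ\nabla\xi$ is symmetric for all $\xi$ killed by $\Phi\circ j_1$ says exactly that the natural first obstruction to prolonging a solution of $\mathscr{R}_1$ --- the antisymmetric part of the iterated second covariant derivative, which is the term that would otherwise force a new algebraic condition --- vanishes identically. One then checks that $\ker\sigma(\Phi)$ is $2$-acyclic by an algebraic Koszul-type argument (the symbol is the kernel of a single bundle map $\T^*\M\otimes\E\to\F^\perp$, and such "one-equation" symbols are always $2$-acyclic by the standard lemma on symbols of corank-$k$ first-order operators, cf.\ \autocite{HLG:67a}), and that surjectivity of $\mathscr{R}_2\to\mathscr{R}_1$ now follows because the only obstruction has been hypothesized away.

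The main obstacle I expect is bookkeeping in the prolongation: one must carefully relate the jet-theoretic statement "$\mathscr{R}_2\to\mathscr{R}_1$ is surjective" to the covariant-derivative statement in \ref{pl:spencer2}. The cleanest route is to use the connection $\nabla$ (together with the Levi-Civita connection on $\M$, or any torsion-free connection on $\T\M$) to split $\mathsf{J}_1\E\cong\E\oplus(\T^*\M\otimes\E)$ and $\mathsf{J}_2\E\cong\E\oplus(\T^*\M\otimes\E)\oplus(S^2\T^*\M\otimes\E)$, rewrite $\Phi$ and its prolongation $p_1\Phi\colon\mathsf{J}_2\E\to\mathsf{J}_1\F^\perp$ in these terms, and observe that the component of $p_1\Phi$ transverse to the zero set, modulo the image of the symbol, is precisely the alternation of $P_{\F^\perp}\nabla P_{\F^\perp}\nabla\xi$ plus curvature terms of $\nabla$ that are themselves in the image of the symbol (hence harmless). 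Torsion of the auxiliary connection on $\M$ contributes only terms already lying in $\img\sigma(\Phi)$, so no torsion-freeness is really needed; nonetheless, stating it with the Levi-Civita connection keeps the formulas shortest. Once this identification is in place, both cases close immediately by invoking \autocite[Theorem~in~\S4.2]{JFP:78}: a first-order linear analytic PDE whose symbol is $2$-acyclic and whose first prolongation is surjective is formally integrable (and, by the Cartan--Kähler theorem, analytically solvable).
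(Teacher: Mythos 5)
Your overall route is the same as the paper's: invoke Goldschmidt's criterion from \autocite{HLG:67a} for $\R_1=\ker(\Phi)$, compute the symbol as $\alpha\otimes e\mapsto\alpha(X_0)P_{\F^\perp}(e)$, use the splitting of $\mathsf{J}_1\F^\perp$ induced by the connection $P_{\F^\perp}\circ\nabla$ to identify the curvature-type obstruction, and observe that in case~(ii) this obstruction is exactly the alternating part of $P_{\F^\perp}\circ\nabla\circ P_{\F^\perp}\circ\nabla\xi$ modulo the image of the prolonged symbol. That skeleton matches the paper's proof. However, two steps are not justified as written. First, in case~(i) you assert that ``$g_2$ projects onto $g_1$, so the prolongation is automatically surjective.'' Surjectivity of $\R_2\to\R_1$ is an independent hypothesis in Goldschmidt's theorem and does not follow from involutivity (or any symbol-only statement): the parallel-section equation $\nabla\xi=0$ has surjective symbol and involutive $g_1=0$, yet $\R_2\to\R_1$ fails to be onto whenever the curvature is nonzero. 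What actually closes case~(i), and what the paper proves by an explicit local-frame computation with $X_0=\partial_1$, is that the \emph{prolonged} symbol $\sigma_1(\Phi)=\iota^1_{X_0}\otimes P_{\F^\perp}\colon S^2\TsM\otimes\E\to\TsM\otimes\F^\perp$ is surjective when $X_0$ is nowhere vanishing, so the cokernel in which the obstruction $\kappa$ takes values is zero. You need that computation; note also that the relevant image throughout is $\image(\sigma_1(\Phi))$, not $\image(\sigma(\Phi))$ as stated in your second paragraph.

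Second, your $2$-acyclicity argument in case~(ii) rests on a ``standard lemma'' that the kernel of a single bundle map $\TsM\otimes\E\to\F^\perp$ is always $2$-acyclic. No such lemma holds in that generality: the symbol of the Killing operator is the kernel of the symmetrization map $\TsM\otimes\TsM\to S^2\TsM$, and it is not $2$-acyclic \textemdash{} which is precisely why the Killing equation carries curvature obstructions beyond first order. For the specific symbol here the conclusion is true, but it must be argued: your own case~(i) observation (determined in the $X_0$-direction, unconstrained transversally, verified by Cartan's test with $X_0$ placed last in the flag) is the honest argument wherever $X_0\neq0$, while at points where $X_0$ vanishes the symbol is all of $\TsM\otimes\E$; the paper reaches the same conclusion via the $\delta$-Poincar\'e Lemma applied to its description of the symbol bundles. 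Incidentally, you work with the genuine kernel $\{T\ |\ T(X_0)\in\F\}$, whereas the paper works with $\TsM\otimes\F$; your involutivity claim therefore cannot simply be quoted from the paper and must be checked as above. With these two repairs \textemdash{} the surjectivity of $\sigma_1(\Phi)$ in case~(i), and a genuine involutivity/$2$-acyclicity argument for this particular symbol in case~(ii) \textemdash{} your outline becomes the paper's proof.
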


\begin{proof}
	We shall first make some constructions that apply to both cases before
	considering each case separately.
	
	Denote $\R_1=\ker(\Phi)\subset\mathsf{J}_1\E$\@.  Let
	$k\in\mathbb{Z}_{\ge0}$\@.  Let
	$\rho_k(\Phi)\colon\mathsf{J}_{k+1}\E\to\mathsf{J}_k\F^\perp$ be the $k$th
	prolongation of $\Phi$ and denote $\R_{k+1}=\ker(\rho_k(\Phi))$\@.  Let
	$\sigma(\Phi)\colon\TsM\otimes\E\to\F^\perp$ be the symbol of $\Phi$ and
	denote by
	$\sigma_k(\Phi)\colon S^{k+1}\TsM\otimes\E\to S^k\TsM\otimes\F^\perp$ be the
	$k$th prolongation of the symbol (or, equivalently, the symbol of the $k$th
	prolongation).  Let us give an explicit formula for $\sigma_k(\Phi)$\@.
	Denote by
	\begin{equation*}
		\iota^k_{X_0}\colon S^{k+1}\TsM\to S^k\TsM
	\end{equation*}
	the contraction by $X_0$\@.  The symbol of $\Phi$ is then
	$\sigma(\Phi)=\iota^0_{X_0}\otimes P_{\F^\perp}$ and the $k$th prolongation
	of the symbol is $\sigma_k(\Phi)=\iota^k_{X_0}\otimes P_{\F^\perp}$\@.  The
	symbol of $\R_1$ is then $\mathsf{G}_1=\ker(\sigma(\Phi))=\TsM\otimes\F$ and
	the symbol of $\R_{k+1}$ is
	$\mathsf{G}_{k+1}=\ker(\sigma_k(\Phi))=S^{k+1}\TsM\otimes\F$\@.  We claim
	that $\mathsf{G}_1$ is involutive.  Indeed, the sequences
	\begin{multline*}
		0 \xrightarrowt{}  S^m\TsM\otimes\F \rd  \T^*\M\otimes S^{m-1}\TsM\otimes\F \rd \bigwedge\nolimits^{\!2} \T^*\M\otimes 
		S^{m-2}\TsM\otimes\F \rd \cdots\\  \rd  \bigwedge\nolimits^{\!{m-1}}\T^*\M\otimes \TsM\otimes\F
		\rd  \bigwedge\nolimits^{\!{m}}\T^* \M\otimes\F,
	\end{multline*}
	for $m\in\mathbb{Z}_{>0}$ are simply the $\delta$-sequences for the vector
	bundle $\F$\@, and these sequences are exact by the $\delta$-Poincar\'e
	Lemma~\autocite[Proposition~3.1.5]{JFP:78}\@.
	
	Since $\mathsf{G}_1$ is involutive and $\mathsf{G}_2$ is a subbundle, the
	only remaining ingredient to verifying the conditions
	of~\cite[Theorem~4.1]{HLG:67a} is to show that $\R_2$ projects surjectively
	onto $\R_1$.  We do this separately for the two conditions in the statement
	of the proposition.  In each case, we work with the commutative diagram
	\begin{center}
		\begin{tikzcd}
			&	& 0\arrow{d}	& 0 \arrow{d}		&		&\\
			&	&  S^2\T^*\M\otimes\E \arrow{r}{\sigma_1(\Phi)} \arrow{d}[swap]{\epsilon} & 
			\T^*\M\otimes \F^\perp \arrow{r}{\tau}\arrow{d}{\epsilon} & \mathsf{K} \arrow{r} & 0 \\
			0\arrow{r}& \R_{2}\arrow{r}\arrow{d} & \mathsf{J}_2\E \arrow{r}{\rho_1(\Phi)}\arrow{d}
			&  \mathsf{J}_1  \F^\perp\arrow{d} & & \\
			0\arrow{r} & \R_1 \arrow{r}  & \mathsf{J}_1\E \arrow{r}{\Phi} \arrow{d}&  \F^\perp\arrow{d} \\
			&	&0	&0	&	&\\
		\end{tikzcd}
	\end{center}
	with exact rows and columns.  Note that $P_{\F^\perp}\circ\nabla$ is a linear
	connection in $\F^\perp$\@,~cf.~the constrained connection
	from~\ref{par:affine-distribution}\@.  Therefore, we can use this connection
	to give a splitting
	\begin{equation*}
		\mathsf{J}_1\F^\perp\simeq\F^\perp\oplus(\TsM\otimes\F^\perp),
	\end{equation*}
	this being a simple case of~\cite[Lemma~2.15]{ADL:23a}\@.  Let
	$j_1\xi(x)\in\R_1$.  Define a map $\kappa:\R_1\to\mathsf{K}$ by
	$\kappa(j_1\xi(x)) = \tau(\beta)$\@, where $\beta\in \TsM\otimes \F^\perp$ is such
	that $\epsilon(\beta)=\rho_1(\Phi)(j_2\xi(x))$.  This is possible by the exactness of
	the third column. To show that $\R_2$ projects onto $\R_1$, we need to show
	that $\eta$ can be chosen so that $j_1\xi=j_1\eta$ and
	$j_2\eta(x)\in\R_2$. By~\autocite[Theorem~2.4.1]{JFP:78}, this is equivalent
	to $\kappa(j_1\xi(x))=0$.  Therefore, to complete the proof we show that we
	can choose $\xi$ so that $\kappa(j_1\xi(x))=0$ in each of the two cases from
	the statement of the proposition.
	
	\ref{pl:spencer1} Here we claim that the vector bundle map $\sigma_1(\Phi)$
	is surjective.  Indeed, let $\alpha\in \TsM\otimes \F$. We can consider a
	local trivialization given in terms of local sections $\{e_1,\dots,e_k\}$,
	where $k$ is the rank of $\E$, such that $\{e_1,\dots,e_m\}$ spans $\F$,
	where $m$ is the rank of $\F$, and $\{e_{m+1},\dots,e_k\}$ spans
	$\F^\perp$. Moreover, we consider a chart for $\M$ such that the chart domain
	coincides with the neighbourhood associated with the trivialization.  Since
	$X_0$ is nowhere vanishing by assumption, we can choose the chart so
	that $X_0=\partial_1$.  We can write
	$\alpha = \alpha^a_j\ \mathrm{d} x^j\otimes e_a$. We want to find
	$A\in S^2\TsM\otimes \E$ such that $P_{\F^\perp} (A(X_0)) = \alpha$. We
	construct $A$ locally. Trivially, we set $A^a_{ij} = 0$, for
	$a\in\{1,\ldots,m\}$, and
	\begin{equation*}
		A^a_{1j}  =A^a_{j1} = \alpha^a_{j},
	\end{equation*}
	for $a\in\{m+1,\ldots,k\}$ and $j\in\{1,\ldots,\dim(\M)\}$. These components
	give rise to a tensor $A$ which is mapped to $\alpha$ by $\sigma_1(\Phi)$\@.
	
	Now we note that, if $\sigma_1(\Phi)$ is surjective, then $\mathsf{K}=0$\@,
	and so $\tau$ is the zero map.  Therefore, $\kappa=0$\@.
	
	\ref{pl:spencer2} In this case, we find $\beta\in\ker(\tau)$ such that 
	$\epsilon(\beta) = \rho_1(\Phi)(j_2\xi(x))$.
	We have 
	\begin{equation*}
		\rho_1(\Phi)(j_2\xi(x))=j_1(P_{\F^\perp}\circ\nabla_{X_0}\xi)(x)
	\end{equation*}
	by definition of prolongation.  By noting that the fiber of
	$\mathsf{J}_1 \F^\perp\simeq\F^\perp\oplus(\TsM\otimes\F^\perp)$ over
	$P_{\F^\perp}(\nabla_{X_0}\xi(x))$ is just $\T^*_x\M\otimes \F^\perp_x$, we
	get
	\begin{equation*}
		\kappa(j_1\xi(x))=\tau((P_{\F^\perp}\circ\nabla)( P_{\F^\perp}\circ\nabla_{X_0}\xi)(x)),
	\end{equation*}
	since $P_{\F^\perp}\circ\nabla$ is a connection in $\F^\perp$.  Thus
	$\kappa(j_1\xi(x))=0$ if and only if
	\begin{equation*}
		(P_{\F^\perp}\circ\nabla)( P_{\F^\perp}\circ\nabla_{X_0}\xi)(x)\in\ker(\tau)=
		\image(\sigma_1(\Phi)).
	\end{equation*}
	By definition of $\sigma_1(\Phi)$\@, this condition will be met when
	$P_{\F^\perp}\circ\nabla\circ P_{\F^\perp}\circ\nabla\xi$ is symmetric for
	$\xi$ satisfying $\Phi\circ j_1\xi(x)=0$\@.
\end{proof}

\subsection{Iterative infinitesimal constructions}

Let $\pi\colon\E\to\M$ be a $\C^r$-vector bundle, let $\F\subset\E$ be a
cogeneralized subbundle, and let $\Xaff$ be an affine vector field on $\E$
over a vector field $X_0$ on $\M$\@.  We suppose that we have a linear
connection $\nabla$ in $\E$\@, and write
$\Xaff=X_0^{\mathrm{h}}+A^{\mathrm{e}}+b^{\mathrm{v}}$\@.  Motivated by
Theorem~\ref{the:reg}\@, we wish to determine ``the largest $\Xaff$-invariant
affine subbundle variety contained in $\F$\@.''  The existence of such an
object is established in~\autocite[Theorem~4.23]{ADL:20a}\@.  However, the
matter of constructing this affine subbundle variety is by no means clear.
Here we provide an infinitesimal construction based on the following
observations.
\begin{enumerate}[1.,nosep]
	\item The cogeneralized subbundle $\F$ is defined by its annihilating
	generalized subbundle $\Lambda(\F)$\@, in the sense that
	\begin{equation*}
		\F=\{w\in\E\ |\ \lambda(w)=0,\ \lambda\in\Gamma^r(\Lambda(\F))\}=
		\bigcap_{\lambda\in\Gamma^r(\Lambda(\F))}(\lambda^{\mathrm{e}})^{-1}(0).
	\end{equation*}
	Therefore, the functions $\lambda^{\mathrm{e}}$ are in the ideal sheaf of any
	variety contained in $\F$\@.
	\item If $\lambda\in\Gamma^r(\Lambda(\F))$\@, then
	$\mathscr{L}^k_{\Xaff}\lambda^{\mathrm{e}}$\@, $k\in\mathbb{Z}_{>0}$\@, will
	vanish on any flow-invariant variety contained in $\F$\@.
	\item The function $\scrLXaff \lambda^{\mathrm{e}}$\@, and more generally the
	functions $\scrLXaff^k \lambda^{\mathrm{e}}$\@, $k\in\mathbb{Z}_{>0}$\@, are
	affine functions on $\E$\@, and so their zero level sets will be affine
	subbundle contained in $\F$\@.  The intersection of these zero level sets,
	\begin{equation}\label{eq:largestaffbun}
		\bigcap_{k\in\mathbb{Z}_{\ge0}}\bigcap_{\lambda\in\Gamma^r(\Lambda(\F))}
		(\scrLXaff^k \lambda^{\mathrm{e}})^{-1}(0),
	\end{equation}
	will then be something like an affine subbundle variety invariant under
	$\Xaff$\@.  Because the intersection of affine subspaces is an affine
	subspace that is possibly empty, the projection of this affine subbundle to
	$\M$ might be a strict subset of $\M$\@, or even empty.
	\item Under suitable regularity hypotheses, the
	intersection~\eqref{eq:largestaffbun} will define the sought after largest
	$\Xaff$-invariant affine subbundle contained in $\F$\@.
\end{enumerate}
The difficulty with this procedure, and the one we address in this section,
is to come up with a suitable characterization of the iterated Lie derivates
of $\lambda^{\mathrm{e}}$ with respect to $\Xaff$\@.

We note that
\begin{equation*}
	\scrLXaff \lambda^{\mathrm{e}} =  \left(\left(\nx +
	A^*\right)\lambda\right)^{\mathrm{e}} +
	\langle\lambda\circ\pi;b\rangle^{\mathrm{h}},
\end{equation*}
using Lemma~\ref{lem:itrLieDeriv}\@.  The following result records a
recursive formulation of the iterated Lie derivatives..
\begin{proposition}\label{prop:DkBk}
	Set $L_0 = \mathrm{id}$ and $c_0=0$.  For $k\in\mathbb{Z}_{>0}$, recursively
	define $L_k$ and $c_k$ by
	\begin{equation*}
		L_k=(\nx+A^*)^k \quad \text{and}\quad
		c_k(\lambda)=\sum_{j=1}^k \scrLX^{j-1}\left\langle L_{k-j}\lambda,b\right\rangle,
	\end{equation*}
	for $U \subseteq \M$ open and $\lambda\in\calGrEs(U)$.
	Then the following statements hold:
	\begin{enumerate}[(i),nosep]
		\item \label{item:Baffkid} $c_{k+l}(\lambda)=c_l(L_k(\lambda))+\scrLX^l c_k(\lambda)$,  for $l,k\in\mathbb{Z}_{\ge 0}$\@;
		\item \label{item:scrLXaffexp}
		$\scrLXaff^k \lambda^e = (L_k(\lambda))^{\mathrm{e}}+
		(c_k(\lambda))^{\mathrm{h}}$, for $k\in\mathbb{Z}_{\geq 0}$.
	\end{enumerate}
	\begin{proof} 
		\ref{item:Baffkid} Compute
		\begin{equation*}
			\begin{split}
				c_{k+l}(\lambda) &= \sum_{j=1}^{k+l} \scrLX^{j-1}\left\langle L_{k+l-j}\lambda,b\right\rangle\\
				&= \sum_{j=1}^l \scrLX^{j-1}\left\langle L_{l-j}(L_k(\lambda)),b\right\rangle
				+ \sum_{j'=1}^k \scrLX^{l+j'-1} \left\langle L_{k-j'}(\lambda),b\right\rangle\\
				&=c_l(L_k(\lambda))+\scrLX^l c_k(\lambda).
			\end{split}
		\end{equation*}
		
		\ref{item:scrLXaffexp} We proceed by induction; for $k=1$, we compute
		\begin{equation*}
			\begin{split}
				\scrLXaff \lambda^{\mathrm{e}} &= (\nx\lambda)^{\mathrm{e}} + (A^*\lambda)^{\mathrm{e}} + (\langle\lambda,b\rangle)^{\mathrm{h}}\\
				&= (L_1(\lambda))^{\mathrm{e}} + (\langle\lambda,b\rangle)^{\mathrm{h}}.
			\end{split}
		\end{equation*}
		Suppose this is true for $k-1$ and compute
		\begin{equation*}
			\begin{split}
				\scrLXaff^k\lambda^{\mathrm{e}} &= \scrLXaff\left(
				\left(L_{k-1}(\lambda)\right)^{\mathrm{e}}+
				\left(c_{k-1}(\lambda)\right)^{\mathrm{h}}\right)\\
				&= \left(L_k(\lambda)\right)^{\mathrm{e}} + \left\langle L_{k-1}(\lambda),b\right\rangle^{\mathrm{h}} 
				+\left( \scrLX c_{k-1}(\lambda)\right)^{\mathrm{h}}\\
				&= \left(L_k(\lambda)\right)^{\mathrm{e}} +  \left\langle L_{k-1}(\lambda),b\right\rangle^{\mathrm{h}} 
				+ (c_k(\lambda) - c_1(L_{k-1}\lambda))^{\mathrm{h}}\\
				&= \left(L_k(\lambda)\right)^{\mathrm{e}} +
				\left(c_k(\lambda)\right)^{\mathrm{h}},
			\end{split}
		\end{equation*}
		where we use linearity of pullback,~\ref{item:Baffkid}\@, and the definition
		of $c_1(\lambda)$\@.
	\end{proof}
\end{proposition}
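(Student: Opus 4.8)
The plan is to prove~\ref{item:Baffkid} by a direct manipulation of finite sums (no induction needed), and then to deduce~\ref{item:scrLXaffexp} by induction on $k$, with~\ref{item:Baffkid} supplying exactly the recursion that makes the induction close.

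For~\ref{item:Baffkid}, I would start from $c_{k+l}(\lambda)=\sum_{j=1}^{k+l}\scrLX^{j-1}\langle L_{k+l-j}\lambda,b\rangle$ and split the sum at $j=l$. For $1\le j\le l$, I use the semigroup property $L_{k+l-j}=L_{l-j}\circ L_k$, valid because each $L_m$ is the $m$th power of the single first-order operator $\nx+A^*$ on $\Gamma^r(\E^*)$ and $L_0=\mathrm{id}$; this identifies the first partial sum with $c_l(L_k(\lambda))$. For $l+1\le j\le k+l$, I substitute $j'=j-l$ and factor $\scrLX^{j-1}=\scrLX^l\circ\scrLX^{j'-1}$, which identifies the second partial sum with $\scrLX^l c_k(\lambda)$. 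Adding the two pieces gives the claimed identity; the only thing to watch is the reindexing and keeping track of which $L_m$ is composed on which side.

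For~\ref{item:scrLXaffexp} I would induct on $k$. The base case $k=1$ is immediate: since $c_1(\lambda)=\langle L_0\lambda,b\rangle=\langle\lambda,b\rangle$, the displayed formula for $\scrLXaff\lambda^{\mathrm{e}}$ preceding the statement reads $\scrLXaff\lambda^{\mathrm{e}}=(L_1(\lambda))^{\mathrm{e}}+(c_1(\lambda))^{\mathrm{h}}$. For the inductive step, assuming the formula for $k-1$, I apply $\scrLXaff=\scrLXh+\scrLAe+\scrLbv$ to $(L_{k-1}(\lambda))^{\mathrm{e}}+(c_{k-1}(\lambda))^{\mathrm{h}}$ and invoke Lemma~\ref{lem:itrLieDeriv}. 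On the evaluation term the three pieces produce $((\nx+A^*)L_{k-1}(\lambda))^{\mathrm{e}}=(L_k(\lambda))^{\mathrm{e}}$ together with $\langle L_{k-1}(\lambda),b\rangle^{\mathrm{h}}$; on the pullback term part~\ref{item:Xafffh} of the lemma kills the $\scrLAe$ and $\scrLbv$ contributions, leaving $(\scrLX c_{k-1}(\lambda))^{\mathrm{h}}$. Collecting, $\scrLXaff^k\lambda^{\mathrm{e}}=(L_k(\lambda))^{\mathrm{e}}+\bigl(\langle L_{k-1}(\lambda),b\rangle+\scrLX c_{k-1}(\lambda)\bigr)^{\mathrm{h}}$, and I recognize the horizontal coefficient as $c_k(\lambda)$ by applying~\ref{item:Baffkid} with $(k,l)$ replaced by $(k-1,1)$ and using $c_1(L_{k-1}(\lambda))=\langle L_{k-1}(\lambda),b\rangle$.

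All of the computations are elementary, so I expect no genuine obstacle; if anything, the one place needing care is the bookkeeping in~\ref{item:Baffkid}, since that identity is precisely what makes the horizontal-lift terms in the induction telescope correctly. The remainder is a mechanical application of Lemma~\ref{lem:itrLieDeriv} and the fact that $A^{\mathrm{e}}$ and $b^{\mathrm{v}}$ annihilate functions pulled back from $\M$.
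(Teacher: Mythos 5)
Your proposal is correct and follows essentially the same route as the paper: part~\ref{item:Baffkid} by splitting and reindexing the defining sum using $L_{k+l-j}=L_{l-j}\circ L_k$ and $\scrLX^{j-1}=\scrLX^l\circ\scrLX^{j'-1}$, and part~\ref{item:scrLXaffexp} by induction, applying Lemma~\ref{lem:itrLieDeriv} termwise and then closing the induction with~\ref{item:Baffkid} for $(k-1,1)$ together with $c_1(L_{k-1}(\lambda))=\langle L_{k-1}(\lambda),b\rangle$. No gaps.
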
 

Of course, the preceding result does not give a very concrete representation
of the iterated Lie derivatives $\scrLXaff^k\lambda^{\mathrm{e}}$\@, but it
is all one can expect.  In terms of using these iterated Lie derivatives to
find a flow-invariant affine subbundle variety, we have the following result.
\begin{proposition}\label{prop:compute-invbun}
	Let\/ $\F\subset\E$ be a\/ $\C^r$-subbundle and let\/
	$\lambda^1,\dots,\lambda^m\in\Gamma^r(\Lambda(\E))$ be generators for\/
	$\Gamma^r(\Lambda(\F))$ as a\/ $\C^r$-module.  Suppose that there exists\/
	$N\in\mathbb{Z}_{>0}$ such that
	\begin{equation*}
		\bigcap_{k\in\mathbb{Z}_{\ge0}}\bigcap_{\lambda\in\Gamma^r(\Lambda(\F))}
		(\scrLXaff^k \lambda^{\mathrm{e}})^{-1}(0)=\mathsf{A}(\Xaff,\F):=
		\bigcap_{k=0}^N\bigcap_{j=1}^m(\scrLXaff^k (\lambda^j)^{\mathrm{e}})^{-1}(0),
	\end{equation*}
	and that either\/ $r=\omega$ or that\/ $r=\infty$ and\/
	$\mathsf{A}(\Xaff,\F)$ is a submanifold.  Then\/ $\mathsf{A}(\Xaff,\F)$ is a
	flow-invariant affine subbundle variety.
	\begin{proof}
		By~\autocite[Proposition~4.3]{ADL:20a}\@, to show flow-invariance it suffices
		to show invariance.  To show invariance, it suffices to show that
		$\scrLXaff{}f$ vanishes on $\mathsf{A}(\Xaff,\F)$ for
		\begin{equation*}
			f\in\{\scrLXaff^k(\lambda^j)^{\mathrm{e}}\ |\ k\in\{0,1,\dots,N\},\
			j\in\{1,\dots,m\}\}.
		\end{equation*}
		This, however, holds by hypothesis.
		
		To show that $\mathsf{A}(\Xaff,\F)$ is an affine subbundle variety, note that
		the functions $\scrLXaff^k(\lambda^j)^{\mathrm{e}}$\@,
		$k\in\{0,1,\dots,N\}$\@, $j\in\{1,\dots,m\}$\@, are affine functions, and so
		can be identified as sections of $\E^*\oplus\mathbb{R}$\@.  Indeed, these are
		generators for a defining subbundle of $\E^*\oplus\mathbb{R}$ whose
		associated affine subbundle variety is exactly $\mathsf{A}(\Xaff,\F)$\@.
	\end{proof}
\end{proposition}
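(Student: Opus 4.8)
The plan is to verify the two assertions separately: that $\A(\Xaff,\F)$ is an affine subbundle variety, and that it is flow-invariant. The first part is essentially structural bookkeeping, and the second reduces, via the results of \autocite{ADL:20a}, to the hypothesis itself.

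For the first assertion, the key input is Proposition~\ref{prop:DkBk}\ref{item:scrLXaffexp}: for each $k\in\{0,\dots,N\}$ and each $j\in\{1,\dots,m\}$ we have $\scrLXaff^k(\lambda^j)^{\mathrm{e}}=(L_k(\lambda^j))^{\mathrm{e}}+(c_k(\lambda^j))^{\mathrm{h}}$, so this function is \emph{affine} on $\E$. Every affine function on $\E$ is the image of a unique section $(\mu,a)$ of $\E^*\oplus\bbR_\M$ under $(\mu,a)\mapsto\bigl(e\mapsto\langle\mu(\pi(e)),e\rangle+a(\pi(e))\bigr)$, so the finite family $\{\scrLXaff^k(\lambda^j)^{\mathrm{e}}\}$ corresponds to the finite family of sections $\{(L_k(\lambda^j),c_k(\lambda^j))\}$; I would let $\Delta\subseteq\E^*\oplus\bbR_\M$ be the generalized subbundle these generate. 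Directly from the definitions one then checks $\A(\Delta)=\A(\Xaff,\F)$, so it remains only to see that $\Delta$ is a \emph{defining} subbundle, i.e.\ that $\Delta_x$ has positive codimension in $\E_x^*\oplus\bbR$ for every $x$. At a point $x$ of the base variety this is immediate, since an $e\in\E_x$ annihilating all the generating affine functions yields a vector $(e,1)$ annihilating $\Delta_x$; at the remaining points one rules out $\Delta_x=\E_x^*\oplus\bbR$ using that $\F$ is a subbundle (so the $\lambda^j$ generate $\Lambda(\F)_x$ at every $x$) together with the stabilization hypothesis, along the lines of the treatment of the largest invariant affine subbundle variety in \autocite[Theorem~4.23]{ADL:20a}.

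For the second assertion, by \autocite[Proposition~4.3]{ADL:20a}, and under the stated regularity alternatives ($r=\omega$, or $r=\infty$ with $\A(\Xaff,\F)$ a submanifold), it suffices to show that $\A(\Xaff,\F)$ is \emph{invariant}, and for this it is enough to show that $\scrLXaff f$ vanishes on $\A(\Xaff,\F)$ for $f$ running over a generating set of the ideal sheaf of $\A(\Xaff,\F)$. By the first part $\A(\Xaff,\F)$ is an affine subbundle variety, so that ideal sheaf is generated by the affine functions $\scrLXaff^k(\lambda^j)^{\mathrm{e}}$, $0\le k\le N$, $1\le j\le m$. For such an $f=\scrLXaff^k(\lambda^j)^{\mathrm{e}}$ we have $\scrLXaff f=\scrLXaff^{k+1}(\lambda^j)^{\mathrm{e}}$, and the standing hypothesis identifies $\A(\Xaff,\F)$ with the \emph{full} intersection over all $l\in\mathbb{Z}_{\ge0}$ and all $\lambda\in\Gamma^r(\Lambda(\F))$ of the sets $(\scrLXaff^l\lambda^{\mathrm{e}})^{-1}(0)$; in particular $\A(\Xaff,\F)\subseteq(\scrLXaff^{k+1}(\lambda^j)^{\mathrm{e}})^{-1}(0)$, so $\scrLXaff f$ vanishes on $\A(\Xaff,\F)$. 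This gives invariance, hence flow-invariance.

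I expect the main obstacle to be the interface between the two notions at play: the hypothesis is a purely set-theoretic stabilization statement about level sets of functions, whereas the conclusion (via \autocite{ADL:20a}) is phrased in terms of invariance of the ideal sheaf and then of flow-invariance. Passing from ``$\scrLXaff f$ vanishes on $\A(\Xaff,\F)$ as a set'' to ``$\scrLXaff f$ lies in the ideal sheaf of $\A(\Xaff,\F)$'', and from there to genuine flow-invariance, is exactly where real-analyticity, or the assumption that $\A(\Xaff,\F)$ is a submanifold, is indispensable, and where one must lean on the structure theory of affine subbundle varieties and their ideal sheaves developed in \autocite{ADL:20a}; the positive-codimension check for $\Delta$ flagged above is a companion subtlety. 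Both are soft once the appropriate results of \autocite{ADL:20a} are invoked, but neither is entirely formal.
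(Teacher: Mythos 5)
Your proposal follows essentially the same route as the paper's own proof: you establish that the functions $\scrLXaff^k(\lambda^j)^{\mathrm{e}}$ are affine (via Proposition~\ref{prop:DkBk}) and hence generate a defining subbundle whose associated affine subbundle variety is $\mathsf{A}(\Xaff,\F)$, and you obtain flow-invariance from invariance via \autocite[Proposition~4.3]{ADL:20a} by noting that $\scrLXaff$ applied to each generator vanishes on $\mathsf{A}(\Xaff,\F)$ by the stabilization hypothesis. Your treatment is if anything slightly more careful, since you explicitly flag the positive-codimension requirement on $\Delta$, a point the paper's proof passes over silently.
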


The global generators hypothesized in the statement of the result exist by an
appropriate version of the Serre\textendash{}Swan Theorem~\autocite[e.g.,][Theorem~20]{ADL:23b}\@.

\section{Example: Rolling disc on an inclined plane}\label{sec:example}

In this section, we consider a rolling disc over an inclined plane without
slip, as depicted in Figure~\ref{fig:disc}\@.
\begin{figure}[htbp]
	\centering
	\includegraphics{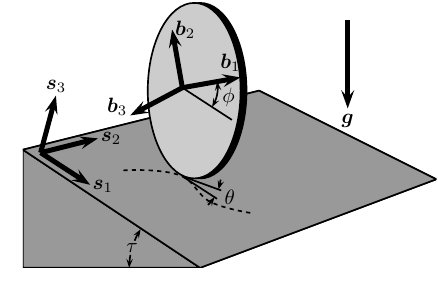}\label{fig:disc}
	\caption{Disc rolling on an inclined plane}
\end{figure}%
A classical treatment of this example is given in~\autocite{NAL:22}\@.  As we
shall see, by properly dealing with the singularity in the generalized
subbundle spanned by sections of $\D$ and their Lie brackets, we are able to
refine the analysis in~\autocite{NAL:22}\@.  \rtxt{Another example that the
	reader may find interesting to approach in a similar way is that of the
	two-wheeled carriage \autocite{MJ/WR:18,Crampin_2009}\@.}

We shall first present a few of the calculations that are needed, and then
apply the main results of the paper to draw conclusions concerning which
constrained variational trajectories are also nonholonomic trajectories.  We
use \textsc{Mathematica}\reg\ for symbolic computations.  The modelling
procedure we give is as outlined in Chapter~3 of~\autocite{FB/ADL:04}\@.

\begin{paragr}
	The spatial frame $\{\mathbf{s}_1,\mathbf{s}_2,\mathbf{s}_3\}$ is chosen so
	that $\mathbf{s}_3$ is perpendicular to the plane on which the disc
	rolls. The spin angle is denoted by $\theta$, the rolling angle is denoted by
	$\phi$, and the inclination angle for the plane is denoted by $\tau$.  The
	body frame $\{\mathbf{b}_1,\mathbf{b}_2,\mathbf{b}_3\}$ is located at the
	geometric center of the disc, with $\mathbf{b}_3$ chosen to be the axis about
	which the disc rolls, \rtxt{so that the center coordinates are $(x,y,R)$,
		where $R$ is the radius of the disc.}
\end{paragr}

\begin{paragr}
	From the problem setup, the configuration manifold is
	\begin{equation*}
		\Q=\left\{
		\begin{bmatrix}
			\cos(\phi)\cos(\theta)& -\sin(\phi)\cos(\theta)& \sin(\theta)\\
			\cos(\phi)\sin(\theta)& -\sin(\phi)\sin(\theta)& -\cos(\theta)\\
			\sin(\phi)& \cos(\phi)& 0
		\end{bmatrix},(x,y,R)
		\right\}\subseteq \mathrm{SO}(3)\times\bbR^3.
	\end{equation*}
	It is easy to see that $\Q\simeq \bbT^2\times \bbR^2$. We carry out our
	calculations in a single chart with coordinates
	$(\theta,\phi,x,y)\in (-\pi,\pi)^2\times \bbR^2$. We abuse the notation and
	identify the chart domain with $\Q$.  We will denote a typical point in $\Q$ as
	$q$\@.  It will be convenient at times to use the coordinate functions as
	functions, and write $x(q)$ or $\phi(q)$\@, for example.
\end{paragr}

\begin{paragr}
	We assume that the inertia tensor of the disc is represented by a diagonal
	matrix $\bbI$ given by $\bbI_{11}=\bbI_{22}= J_s$ and $\bbI_{33}=J_r$, where
	$J_s$ denotes the spinning principal inertia associated with $\mathbf{b}_1$
	and $\mathbf{b}_2$, and $J_r$ is the principal inertia related to rolling.
	We assume that $\mathbf{b}_3$ is an axis of symmetry of the body.  We denote
	the mass of the disc by $m$\@.  One then readily ascertains that
	\begin{equation*}
		\bbG = m dx\otimes dx + m dy\otimes dy +
		J_sd\theta\otimes d\theta + J_rd\phi\otimes d\phi.
	\end{equation*}
	Clearly all Christoffel symbols for the associated Levi-Civita connection
	vanish.
\end{paragr}

\begin{paragr}
	The potential energy is given by $V(\theta,\phi,x,y) = mg(R-x\sin(\tau))$\@,
	where $g$ is the acceleration due to gravity.  Hence we compute
	\begin{equation*}
		\mathrm{grad}\ V = \bbG^\#\circ dV = -\frac{g\sin(\tau)}{m}\partial_x.
	\end{equation*}
\end{paragr}

\begin{paragr}
	By assumption, the disc rolls with no-slip on the inclined plan.  Hence, for
	any non-holonomic trajectory $t\mapsto (\theta(t),\phi(t),x(t),y(t))$, we
	must have
	\begin{equation*}
		\begin{split}
			\dot{x}(t) &= R\dot{\phi}(t)\cos(\theta(t)),\\
			\dot{y}(t) &= R\dot{\phi}(t)\sin(\theta(t)).
		\end{split}
	\end{equation*}
	We define a vector bundle map $F\colon \TQ\to \Q\times \bbR^2$ by
	\begin{equation*}
		F(\theta,\phi,x,y,v_\theta,v_\phi,v_x,v_y)= (\theta,\phi,x,y, v_x -
		Rv_\phi\cos(\theta), v_y -Rv_\phi\sin(\theta)).
	\end{equation*}
	We set $\D=\ker(F)$. It is clear that $\D$ has constant rank and $\D$ is the
	constraint distribution.
\end{paragr}

\begin{paragr}
	Let us determine a suitable basis for $\TQ$ adapted to the distribution $\D$
	and its $\bbG$-orthogonal complement $\D^\perp$\@.  By construction, the
	vector fields
	\begin{equation*}
		\left\{\partial_\theta,\partial_\phi  + R\cos(\theta) \partial_x +
		R\sin(\theta)\partial_y\right\}
	\end{equation*}
	span $\D$. In fact, one can see that the given vector fields are
	$\bbG$-orthogonal.  One can thus normalize to obtain $\bbG$-orthonormal
	vector fields $X_1,X_2\in\Gamma^r(\D)$ spanning $\D$. That is,
	\begin{equation*}
		X_1(\theta,\phi,x,y) = \frac{1}{\sqrt{J_s}} \partial_\theta 
	\end{equation*}
	and 
	\begin{equation*}
		X_2(\theta,\phi,x,y) = \frac{1}{\sqrt{J_r + m R^2  }}\left( 
		\partial_\phi  + R\cos(\theta) \partial_x + R\sin(\theta)\partial_y\right).
	\end{equation*}
	A convenient orthonormal basis for $\D^\perp$ is verified to be given by
	$\{X_3,X_4\}$ with
	\begin{equation*}
		X_3(\theta,\phi,x,y) =
		\frac{\sin(\theta)}{\sqrt{m}}\partial_x-
		\frac{\cos(\theta)}{\sqrt{m}}\partial_y
	\end{equation*}
	and
	\begin{equation*}
		X_4(\theta,\phi,x,y) =
		\frac{\sqrt{J_r}\cos(\theta)}{\sqrt{m}\sqrt{J_r+mR^2}}\partial_x+
		\frac{\sqrt{J_r}\sin(\theta)}{\sqrt{m}\sqrt{J_r+mR^2}}\partial_x-
		\frac{\sqrt{m}}{\sqrt{J_r}\sqrt{J_r+mR^2}}\partial_\phi.
	\end{equation*}
\end{paragr}

\begin{paragr}
	The bases $\{X_1,X_2\}$ and $\{X_3,X_4\}$ for $\D$ and $\D^\perp$\@,
	respectively, allow us to introduce coordinates for $\D$ and $\D^\perp$\@.
	We write a typical point in $\D$ as $v_sX_1+v_rX_1$\@, where $v_s$ is the
	``spin velocity'' and $v_r$ is the ``roll velocity.''  Thus we use
	coordinates $(\theta,\phi,x,y,v_r,v_s)$ for $\D$\@.
	
	We will denote a typical point in $\D$ by $v$ and, as we indicated above for
	coordinates for $\Q$\@, we will think of coordinates for $\D$ as functions on
	$\D$\@.  Thus we will sometimes write $y(v)$\@, or $\theta(v)$\@, or
	$v_r(v)$\@, for example.
\end{paragr}

\begin{paragr}
	Note that the basis vector fields for $\D$ and $\D^\perp$ give basis vector
	fields for the pull-back bundles $\pi_\D^*\D$ and $\pi_\D^*\D^\perp$\@, and
	we denote these bases by
	\begin{equation*}
		\{\pi_\D^*X_1,\pi_\D^*X_2\},\quad\{\pi_\D^*X_3,\pi_\D^*X_4\},
	\end{equation*}
	respectively.  There are also dual bases that we denote by
	\begin{equation*}
		\{\pi_\D^*X^*_1,\pi_\D^*X^*_2\},\quad\{\pi_\D^*X^*_3,\pi_\D^*X^*_4\}.
	\end{equation*}
	
	Using the basis for $\pi_\D^*\D^\perp$\@, we write a typical point in
	$\pi_\D^*\D^\perp$ as $p_1\pi_\D^*X_3+p_2\pi_\D^*X_4$\@.  This gives
	coordinates $(\theta,\phi,x,y,v_s,v_r,p_1,p_2)$ for $\pi_\D^*\D^\perp$\@.  A
	typical point in $\pi_\D^*\D^\perp$ we denote by $p$\@, and we think of the
	coordinates as functions on $\pi_\D^*\D^\perp$\@.  Thus we may write
	$x(p)$\@, or $\phi(p)$\@, or $v_s(p)$\@, or $p_1(p)$\@.
\end{paragr}

\begin{paragr}
	We calculate
	\begin{align*}
		\nabg_{X_1}X_2 =&\; -\frac{\sqrt{m}R}{\sqrt{J_s}\sqrt{J_r+mR^2}}X_3,\\
		\nabg_{X_1}X_3 =&\; \frac{\sqrt{m}R}{\sqrt{J_s}\sqrt{J_r+mR^2}}X_2+
		\frac{\sqrt{J_r}}{\sqrt{J_s}\sqrt{J_r+mR^2}}X_4,\\
		\nabg_{X_1}X_4 =&\; \frac{\sqrt{J_r}}{\sqrt{J_s}\sqrt{J_r+mR^2}}X_3,
	\end{align*}
	and all other covariant derivatives of the basis vector fields are zero.
	From these computations, we make a few immediate conclusions.
	\begin{enumerate}[1.,nosep]
		\item The Christoffel symbols of the linear connection $\nabd=P_\D\circ\nabg$
		in $\D$ with respect to the frames $\{X_1,X_2,X_3,X_4\}$ for $\TQ$ and
		$\{X_1,X_2\}$ for $\D$ are zero.
		\item The Christoffel symbols of the linear connection
		$\nabdp=P_{\D^\perp}\circ\nabg$ in $\D^\perp$ with respect to the frames
		$\{X_1,X_2,X_3,X_4\}$ for $\TQ$ and $\{X_3,X_4\}$ $\D^\perp$ are determined
		by
		\begin{equation*}
			\nabdp_{X_1}X_3=\frac{\sqrt{J_r}}{\sqrt{J_s}\sqrt{J_r+mR^2}}X_4,\quad
			\nabdp_{X_1}X_4=\frac{\sqrt{J_r}}{\sqrt{J_s}\sqrt{J_r+mR^2}}X_3.
		\end{equation*}
		\item The Frobenius curvature $F_\D$ is determined by
		\begin{equation*}
			F_\D(X_1,X_2)=-F_D(X_2,X_1)=-\frac{\sqrt{m}R}{\sqrt{J_s}\sqrt{J_r+mR^2}}X_3.
		\end{equation*}
		\item The geodesic curvature $G_\D$ is determined by
		\begin{equation*}
			G_\D(X_1,X_2)=G_D(X_2,X_1)=-\frac{\sqrt{m}R}{\sqrt{J_s}\sqrt{J_r+mR^2}}X_3.
		\end{equation*}
		\item The Frobenius and geodesic curvatures $F_{\D^\perp}$ and $G_{\D^\perp}$
		both vanish; thus $\D^\perp$ is integrable and geodesically invariant for
		$\nabg$\@.
	\end{enumerate}
\end{paragr}

\begin{paragr}
	Let us next calculate $\hat{F}_\D^*$\@, which is a vector bundle map from
	$\pi_\D^*\D^\perp$ to $\pi_\D^*\D$\@.  We represent this bundle map as a
	$2\times2$ matrix representing this map in the bases
	$\{\pi_\D^*X_3,\pi_\D^*X_4\}$ and
	$\{\pi_\D^*X_1,\pi_\D^*X_2\}$\@.  Using the computation of
	$F_\D$ from above, we ascertain that
	\begin{equation*}
		[\hat{F}_\D^*]=\begin{bmatrix}
			-\frac{\sqrt{m}R}{\sqrt{J_s}\sqrt{J_r+mR^2}}v_r&0\\
			\frac{\sqrt{m}R}{\sqrt{J_s}\sqrt{J_r+mR^2}}v_s&0\end{bmatrix}.
	\end{equation*}
	We observe that
	\begin{equation*}
		\ker(\hat{F}_\D^*)_v=
		\begin{cases}\Span\{\pi_\D^*X_4\},&v_s^2+v_r^2\not=0,\\
			(\pi_\D^*\D^\perp)_v,&v_s=v_r=0,\end{cases}
	\end{equation*}
	where $v\in\D$\@.
\end{paragr}

\begin{paragr}
	Let us next calculate $A_\D$ and $b_\D$\@.  Since $G_{\D^\perp}$ and
	$F_{\D^\perp}$ are zero, $A_\D$ is also zero.  We note that $b_\D$ is a
	section of the pullback bundle $\pi_\D^*\D^\perp$\@.  We represent $b_\D$ in
	terms of its components in the basis provided by $\pi_\D^*X_3$ and
	$\pi_\D^*X_4$\@.  A calculation gives
	\begin{equation*}
		b_\D=-\left(\sqrt{m}g\sin(\tau)\sin(\theta)+
		\frac{2R}{\sqrt{J_s}\sqrt{J_r+mR^2}}v_sv_r\right)\pi_\D^*X_3-
		\frac{g\sqrt{m}\sqrt{J_r}}{\sqrt{J_r+mR^2}}\sin(\tau)\cos(\theta)\pi_\D^*X_4.
	\end{equation*}
\end{paragr}

\begin{paragr}
	Let us determine the vector field $\Xnh$ using the ``Poincar\'e
	representation'' explained in \autocite[\S4.6.4]{FB/ADL:04}\@.  We can use
	the computations above to ascertain that
	\begin{multline*}
		\Xnh(\theta,\phi,x,y,v_s,v_r)\\
		=\frac{v_s}{\sqrt{J_s}}\partial_\theta+
		\frac{v_r}{\sqrt{J_r+mR^2}}\partial_\phi+
		\frac{R\cos(\theta)v_r}{\sqrt{J_r+mR^2}}\partial_x+
		\frac{R\sin(\theta)v_r}{\sqrt{J_r+mR^2}}\partial_y+
		\frac{mgR\sin(\tau)\cos(\theta)}{\sqrt{J_r+mR^2}}\partial_{v_r}.
	\end{multline*}
	Thus the associated differential equations are
	\begin{align*}
		\dot{x}(t)=&\;\frac{R\cos(\theta(t))v_r(t)}{\sqrt{J_r+mR^2}},\\
		\dot{y}(t)=&\;\frac{R\sin(\theta(t))v_r(t)}{\sqrt{J_r+mR^2}},\\
		\dot{\theta}(t)=&\;\frac{v_s(t)}{\sqrt{J_s}},\\
		\dot{\phi}(t)=&\;\frac{v_r(t)}{\sqrt{J_r+mR^2}},\\
		\dot{v}_s(t)=&\;0,\\
		\dot{v}_r(t)=&\;\frac{mgR\sin(\tau)\cos(\theta(t))}{\sqrt{J_r+mR^2}}.
	\end{align*}
\end{paragr}

\begin{paragr}
	We can similarly ascertain the equations governing the regular constrained
	variational trajectories.  Here we again use the Poincar\'e representation
	for the vector fields.  We compute the vector field on $\pi_\D^*\D^\perp$
	representing the dynamics~\eqref{eq:dreg} as
	\begin{multline*}
		X_\D^{\mathrm{rcv}}=
		\frac{R\cos(\theta)v_r}{\sqrt{J_r+mR^2}}\partial_x+
		\frac{R\sin(\theta)v_r}{\sqrt{J_r+mR^2}}\partial_y+
		\frac{v_s}{\sqrt{J_s}}\partial_\theta+
		\frac{v_r}{\sqrt{J_r+mR^2}}\partial_\phi\\
		-\frac{\sqrt{m}v_rp_1}{\sqrt{J_s}\sqrt{J_r+mR^2}}\partial_{v_s}+
		\frac{mgR\sqrt{J_s}\cos(\theta)\sin(\tau)+
			\sqrt{m}v_sp_1}{\sqrt{J_s}\sqrt{J_r+mR^2}}\partial_{v_r}\\
		+\left(\frac{\sqrt{J_r}v_sp_2}{\sqrt{J_s}\sqrt{J_r+mR^2}}
		-\frac{2\sqrt{m}Rv_sv_r}{\sqrt{J_s}\sqrt{J_r+mR^2}}
		-\sqrt{m}g\sin(\tau)\sin(\theta)\right)\partial_{p_1}\\
		-\left(\frac{\sqrt{J_r}v_sp_1}{\sqrt{J_s}\sqrt{J_r+mR^2}}+
		\frac{\sqrt{m}\sqrt{J_r}g\sin(\tau)\cos(\theta)}{\sqrt{J_r+mR^2}}\right)
		\partial_{p_2}.
	\end{multline*}
	The differential equations are then computed to be
	\begin{align*}
		\dot{x}(t)=&\;\frac{R\cos(\theta(t))v_r(t)}{\sqrt{J_r+mR^2}},\\
		\dot{y}(t)=&\;\frac{R\sin(\theta(t))v_r(t)}{\sqrt{J_r+mR^2}},\\
		\dot{\theta}(t)=&\;\frac{v_s(t)}{\sqrt{J_s}},\\
		\dot{\phi}(t)=&\;\frac{v_r(t)}{\sqrt{J_r+mR^2}},\\
		\dot{v}_s(t)=&\;-\frac{\sqrt{m}Rv_r(t)p_1(t)}{\sqrt{J_s}\sqrt{J_r+mR^2}},\\
		\dot{v}_r(t)=&\;\frac{mgR\sqrt{J_s}\cos(\theta(t))\sin(\tau)+
			\sqrt{m}Rv_s(t)p_1(t)}{\sqrt{J_s}\sqrt{J_r+mR^2}},\\
		\dot{p}_1(t)=&\;\frac{\sqrt{J_r}v_s(t)p_2(t)}{\sqrt{J_s}\sqrt{J_r+mR^2}}
		-\frac{2\sqrt{m}Rv_s(t)v_r(t)}{\sqrt{J_s}\sqrt{J_r+mR^2}}
		-\sqrt{m}g\sin(\tau)\sin(\theta(t)),\\
		\dot{p}_2(t)=&\;-\frac{\sqrt{J_r}v_s(t)p_1(t)}{\sqrt{J_s}\sqrt{J_r+mR^2}}
		-\frac{\sqrt{m}\sqrt{J_r}g\sin(\tau)\cos(\theta(t))}{\sqrt{J_r+mR^2}}.
	\end{align*}
\end{paragr}

\begin{paragr}
	One can also ascertain that the singular constrained trajectories as
	prescribed by~\eqref{eq:dsing} are determined by curves
	$t\mapsto\gamma(t)\in\Q$ and nonzero sections $t\mapsto\lambda(t)$ of
	$\D^\perp$ over $\gamma$ satisfying the algebraic equation
	\begin{align*}
		\frac{\sqrt{m}R}{\sqrt{J_s}\sqrt{J_r+mR^2}}v_r(t)p_1(t)=&\;0,\\
		\frac{\sqrt{m}R}{\sqrt{J_s}\sqrt{J_r+mR^2}}v_s(t)p_2(t)=&\;0
	\end{align*}
	(of course, one can eliminate the coefficient
	$\frac{\sqrt{m}R}{\sqrt{J_s}\sqrt{J_r+mR^2}}$\@, but we elect to keep it
	because it reminds us where these conditions come from), and the differential
	equation
	\begin{align*}
		\dot{p}_1(t)=&\;\frac{\sqrt{J_r}v_s(t)p_2(t)}{\sqrt{J_s}\sqrt{J_r+mR^2}},\\
		\dot{p}_2(t)=&\;-\frac{\sqrt{J_r}v_s(t)p_1(t)}{\sqrt{J_s}\sqrt{J_r+mR^2}}.
	\end{align*}
\end{paragr}

\begin{paragr}
	We now use Proposition~\ref{prop:compute-invbun} to find the largest
	$X_\D^{\mathrm{reg}}$-invariant affine subbundle variety of
	$\pi_\D^*\D^\perp$ contained in $\ker(\hat{F}_\D^*)$\@, as required by
	Theorem~\ref{the:reg} to determine the regular constrained variational
	trajectories that are also nonholonomic trajectories.  We have
	\begin{multline*}
		X_\D^{\mathrm{reg}}=
		\frac{R\cos(\theta)v_r}{\sqrt{J_r+mR^2}}\partial_x+
		\frac{R\sin(\theta)v_r}{\sqrt{J_r+mR^2}}\partial_y+
		\frac{v_s}{\sqrt{J_s}}\partial_\theta+
		\frac{v_r}{\sqrt{J_r+mR^2}}\partial_\phi
		+\frac{mgR\sqrt{J_s}\sin(\tau)\cos(\theta)}
		{\sqrt{J_s}\sqrt{J_r+mR^2}}\partial_{v_r}\\
		+\left(\frac{\sqrt{J_r}v_sp_2}{\sqrt{J_s}\sqrt{J_r+mR^2}}
		-\frac{2\sqrt{m}Rv_sv_r}{\sqrt{J_s}\sqrt{J_r+mR^2}}
		-\sqrt{m}g\sin(\tau)\sin(\theta)\right)\partial_{p_1}\\
		-\left(\frac{\sqrt{J_r}v_sp_1}{\sqrt{J_s}\sqrt{J_r+mR^2}}+
		\frac{\sqrt{m}\sqrt{J_r}g\sin(\tau)\cos(\theta)}{\sqrt{J_r+mR^2}}\right)
		\partial_{p_2},
	\end{multline*}
	and the associated differential equations are
	\begin{equation}\label{eq:diskXreg}
		\begin{aligned}
			\dot{x}(t)=&\;\frac{R\cos(\theta(t))v_r(t)}{\sqrt{J_r+mR^2}},\\
			\dot{y}(t)=&\;\frac{R\sin(\theta(t))v_r(t)}{\sqrt{J_r+mR^2}},\\
			\dot{\theta}(t)=&\;\frac{v_s(t)}{\sqrt{J_s}},\\
			\dot{\phi}(t)=&\;\frac{v_r(t)}{\sqrt{J_r+mR^2}},\\
			\dot{v}_s(t)=&\;0,\\
			\dot{v}_r(t)=&\;\frac{mgR\sqrt{J_s}\sin(\tau)\cos(\theta(t))}
			{\sqrt{J_s}\sqrt{J_r+mR^2}},\\
			\dot{p}_1(t)=&\;\frac{\sqrt{J_r}v_s(t)p_2(t)}{\sqrt{J_s}\sqrt{J_r+mR^2}}
			-\frac{2\sqrt{m}Rv_s(t)v_r(t)}{\sqrt{J_s}\sqrt{J_r+mR^2}}
			-\sqrt{m}g\sin(\tau)\sin(\theta(t)),\\
			\dot{p}_2(t)=&\;-\frac{\sqrt{J_r}v_s(t)p_1(t)}{\sqrt{J_s}\sqrt{J_r+mR^2}}
			-\frac{\sqrt{m}\sqrt{J_r}g\sin(\tau)\cos(\theta(t))}{\sqrt{J_r+mR^2}}.
		\end{aligned}
	\end{equation}
	We note that
	\begin{equation*}
		\ker(\hat{F}_\D^*)=(\lambda^{\mathrm{e}})^{-1}(0),
	\end{equation*}
	where $\lambda\in(\pi_\D^*\D^\perp)^*$ is given by
	\begin{equation*}
		\lambda=(v_s^2+v_r^2)\pi_\D^*X_3^*.
	\end{equation*}
	One computes
	\begin{multline*}
		\scrL_{X_\D^{\mathrm{reg}}}\lambda^{\mathrm{e}}=
		\frac{2mgR\sin(\tau)\cos(\theta)}{\sqrt{J_r+mR^2}}v_rp_1\\
		+(v_s^2+v_r^2)\left(\frac{\sqrt{J_r}}{\sqrt{J_s}\sqrt{J_r+mR^2}}v_sp_2-
		\sqrt{m}g\sin(\tau)\sin(\theta)-
		\frac{2\sqrt{m}R}{\sqrt{J_s}\sqrt{J_r+mR^2}}v_sv_r\right).
	\end{multline*}
	One can also compute $\scrL^k_{X_\D^{\mathrm{reg}}}\lambda^{\mathrm{e}}$ for
	$k\ge2$\@, but the symbolic expressions are too unwieldy to record.
	
	Our strategy for finding invariant affine subbundle varieties is the
	following.  For $k\in\mathbb{Z}_{\ge0}$\@, denote
	\begin{equation*}
		\A_k(X_\D^{\mathrm{reg}},\lambda^{\mathrm{e}})=
		\bigcap_{j=0}^k(\scrL^k_{X_\D^{\mathrm{reg}}}\lambda^{\mathrm{e}})^{-1}(0).
	\end{equation*}
	We shall sequentially consider conditions on $p\in\pi_\D^*\D^\perp$ that
	ensure that $p\in\A_k(X_\D^{\mathrm{reg}},\lambda^{\mathrm{e}})$\@,
	$k\in\mathbb{Z}_{\ge0}$\@.  We do this explicitly and in detail for
	$k\in\{0,1\}$\@, and then use our conclusions, along with a recording of
	conclusions deduced from \textsc{Mathematica}\reg\ and the differential
	equations~\eqref{eq:diskXreg}\@, to finalize the conditions on $p$\@.
	
	\begin{enumerate}[R1.,nosep]
		\item We first take $\sin(\tau)\not=0$\@.  If
		$p\in\A_0(X_\D^{\mathrm{reg}},\lambda^{\mathrm{e}})$\@, then either
		(a)~$p_1(p)=0$ or (b)~$v_s(p)=v_r(p)=0$\@.  We consider these possibilities
		in turn.
		
		\begin{enumerate}[(a),nosep]
			\item If $p_1(p)=0$\@, then
			$p\in\A_1(X_\D^{\mathrm{reg}},\lambda^{\mathrm{e}})$ if and only if either
			(i)~$v_s(p)=v_r(p)=0$ or~(ii)
			\begin{equation*}
				\frac{\sqrt{J_r}}{\sqrt{J_s}\sqrt{J_r+mR^2}}v_s(p)p_2(p)-
				\sqrt{m}g\sin(\tau)\sin(\theta(p))-
				\frac{2\sqrt{m}R}{\sqrt{J_s}\sqrt{J_r+mR^2}}v_s(p)v_r(p)=0.
			\end{equation*}
			Let us denote by $\zeta(p)$ the expression on the left in the preceding
			equation.  We consider the preceding possibilities in turn.
			
			\begin{enumerate}[(i),nosep]
				\item If
				\begin{equation*}
					p\in\A'_1(X_\D^{\mathrm{reg}},\lambda^{\mathrm{e}}):=
					\{p'\in\pi_{\D}^*\D^\perp\ |\ p_1(p')=0,\ v_s(p')=0,\ v_r(p')=0\}, 
				\end{equation*}
				then we calculate
				\begin{align*}
					\scrL^2_{X_\D^{\mathrm{reg}}}\lambda^{\mathrm{e}}(p)=&\;0,\\
					\scrL^3_{X_\D^{\mathrm{reg}}}\lambda^{\mathrm{e}}(p)=&\;
					-\frac{6m^{5/2}g^3R^2\sin(\tau)^3}{J_r+mR^2}\cos(\theta(p))^2\sin(\theta(p)).
				\end{align*}
				This last expression vanishes exactly when either $\sin(\theta(p))=0$ or
				$\cos(\theta(p))=0$\@.  If we take
				\begin{multline*}
					\A=\{p\in\pi_{\D}^*\D^\perp\ |\ p_1(p)=0,\ v_s(p)=0,\ v_r(p)=0,\
					\sin(\theta(p))=0\}\\\cup
					\{p\in\pi_{\D}^*\D^\perp\ |\ p_1(p)=0,\ v_s(p)=0,\ v_r(p)=0,\
					\cos(\theta(p))=0\},
				\end{multline*}
				then the differential equations~\eqref{eq:diskXreg} are easily examined to
				see that $\A$ is an $X_\D^{\mathrm{reg}}$-invariant affine subbundle
				variety.  We observe that $p_2$ is constant along integral curves of
				$X_\D^{\mathrm{reg}}$ with initial conditions in $\A$\@.
				
				(We remark that the submanifold defined by the condition $v_s(p)=0$ is
				$X_\D^{\mathrm{reg}}$-invariant.  That is, integral curves of
				$X_\D^{\mathrm{reg}}$ with an initial condition with $v_s(0)=0$ will satisfy
				$v_s(t)=0$ for all $t$\@.  In this case, one also sees that $\theta$ is
				constant.  This is a starting point for much of the analysis of the
				differential equations~\eqref{eq:diskXreg} that we reference below.)
				
				\item For the condition that $\zeta(p)=0$ we consider two cases, namely
				(I)~$v_s(p)=0$ and (II)~$v_s(p)\not=0$\@.  Let us consider these cases in
				turn.
				\begin{enumerate}[(I),nosep]
					\item If
					\begin{equation*}
						p\in\A'_1(X_\D^{\mathrm{reg}},\lambda^{\mathrm{e}}):=
						\{p'\in\pi_{\D}^*\D^\perp\ |\ p_1(p')=0,\ v_s(p')=0\},
					\end{equation*}
					then we calculate
					\begin{equation*}
						\scrL_{X_\D^{\mathrm{reg}}}\lambda^{\mathrm{e}}(p)=
						-\sqrt{m}g\sin(\tau)\sin(\theta(p))v_r(p)^2.
					\end{equation*}
					Excluding the condition $v_r(p)=0$ which we have already considered above,
					the vanishing of the expression on the right requires that
					$\sin(\theta(p))=0$\@.  If we take
					\begin{equation*}
						\A=\{p\in\pi_{\D}^*\D^\perp\ |\ p_1(p)=0,\ v_s(p)=0,\ \sin(\theta(p))=0\},
					\end{equation*}
					then the differential equations~\eqref{eq:diskXreg} are easily examined to
					see that $\A$ is an $X_\D^{\mathrm{reg}}$-invariant affine subbundle
					variety.  The evolution of $p_2$ along integral curves is as determined by
					the $p_2$-component of the differential equations~\eqref{eq:diskXreg}\@,
					noting that $p_1=0$ in this case.
					
					\item In this case, the condition $\zeta(p)=0$ uniquely specifies $p_2(p)$\@,
					and we denote this value of $p_2$ by $p_2^*(v)$\@, noting that this value
					depends only on $v=\pi_\D^*\pi_{\D^\perp}(p)$\@.  If
					\begin{equation*}
						p\in\A'_1(X_\D^{\mathrm{reg}},\lambda^{\mathrm{e}}):=
						\{p'\in\pi_{\D}^*\D^\perp\ |\ p_1(p')=0,\ p_2(p')=p_2^*(v')\},
					\end{equation*}
					then we calculate
					\begin{equation*}
						\scrL^2_{X_\D^{\mathrm{reg}}}\lambda^{\mathrm{e}}(p)=
						\alpha_1(v)\cos(\theta(p)),
					\end{equation*}
					where $\alpha_1$ is a complicated nowhere zero $\theta$-independent function
					of the points $v\in\D$ satisfying $v_s(v)\not=0$\@.  Now we let
					\begin{multline*}
						p\in\A'_1(X_\D^{\mathrm{reg}},\lambda^{\mathrm{e}})\\:=
						\{p'\in\pi_{\D}^*\D^\perp\ |\ p_1(p')=0,\ p_2(p')=p_2^*(v'),\
						\cos(\theta(p'))=0\},
					\end{multline*}
					then we calculate
					\begin{equation*}
						\scrL^3_{X_\D^{\mathrm{reg}}}\lambda^{\mathrm{e}}(p)=
						\alpha_2(v)\sin(\theta(p)),
					\end{equation*}
					where $\alpha_2$ is a complicated nowhere zero $\theta$-independent function
					of the points $v\in\D$ satisfying $v_s(v)\not=0$\@.  Since $\sin$ and $\cos$
					have no common zeros, we see that the subset
					\begin{equation*}
						\{p\in\pi_\D^*\D^\perp\ |\ p_1(p)=0,\ v_s(p)\not=0\}
					\end{equation*}
					contains no $X_\D^{\mathrm{reg}}$-invariant affine subbundle varieties.
				\end{enumerate}
			\end{enumerate}
			
			\item If
			\begin{equation*}
				p\in\A'_1(X_\D^{\mathrm{reg}},\lambda^{\mathrm{e}}):=
				\{p'\in\pi_\D^*\D^\perp\ |\ v_s(p)=0,\ v_r(p)=0\},
			\end{equation*}
			then
			\begin{equation*}
				\scrL^2_{X_\D^{\mathrm{reg}}}\lambda^{\mathrm{e}}(p)=
				\frac{2m^2g^2R^2\sin(\tau)^2}{J_r+mR^2}\cos(\theta(p))^2p_1(p).
			\end{equation*}
			Setting aside the condition that $p_1(p)=0$ that has already been considered,
			the vanishing of the expression on the right requires that
			$\cos(\theta(p))=0$\@.  Now, if we let
			\begin{equation*}
				\A=\{p\in\pi_\D^*\D^\perp\ |\ v_s(p)=0,\ v_r(p)=0,\ \cos(\theta(p))=0\},
			\end{equation*}
			then a consideration of the differential equations~\eqref{eq:diskXreg} shows
			that $\A$ is an $X_\D^{\mathrm{reg}}$-invariant affine subbundle variety.
			The evolution of $p_1$ and $p_2$ along integral curves is as determined by
			the $p_1$- and $p_2$-components of the differential
			equations~\eqref{eq:diskXreg}\@.
		\end{enumerate}
		
		\item Now we consider the case of $\sin(\tau)=0$\@.  As above, for
		$p\in\A_0(X_\D^{\mathrm{reg}},\lambda^{\mathrm{e}})$\@, we have the two cases
		(a)~$p_1(p)=0$ or (b)~$v_s(p)=v_r(p)=0$\@, which we consider in turn.
		
		\begin{enumerate}[(a),nosep]
			\item If $p_1(p)=0$\@, then
			$p\in\A_1(X_\D^{\mathrm{reg}},\lambda^{\mathrm{e}})$ if and only
			if either (i)~$v_s(p)=v_r(p)=0$ or~(ii)
			\begin{equation*}
				\frac{\sqrt{J_r}}{\sqrt{J_s}\sqrt{J_r+mR^2}}v_s(p)p_2(p)-
				\frac{2\sqrt{m}R}{\sqrt{J_s}\sqrt{J_r+mR^2}}v_s(p)v_r(p)=0.
			\end{equation*}
			Let us again abbreviate the expression on the left by $\zeta(p)$\@.  We
			consider the two preceding cases.
			\begin{enumerate}[(i),nosep]
				\item If we take
				\begin{equation*}
					\A=\{p\in\pi_\D^*\D^\perp\ |\ p_1(p)=0,\ v_s(p)=0,\ v_r(p)=0\},
				\end{equation*}
				then we can immediately deduce from the differential
				equations~\eqref{eq:diskXreg} that $\A$ is an $X_\D^{\mathrm{reg}}$-invariant
				affine subbundle variety.  We observe that $p_2$ is constant along integral
				curves of $X_\D^{\mathrm{reg}}$ with initial conditions in $\A$\@.
				
				\item For the condition that $\zeta(p)=0$ we consider two cases, namely
				(I)~$v_s(p)=0$ and (II)~$v_s(p)\not=0$\@.  Let us consider these cases in
				turn.
				
				\begin{enumerate}[(I),nosep]
					\item In this case, we can immediately verify from the differential
					equations~\eqref{eq:diskXreg} that
					\begin{equation*}
						\A=\{p\in\pi_\D^*\D^\perp\ |\ p_1(p)=0,\ v_s(p)=0\}
					\end{equation*}
					is an $X_\D^{\mathrm{reg}}$-invariant affine subbundle variety.  We observe
					that $p_2$ is constant along integral curves of $X_\D^{\mathrm{reg}}$ with
					initial conditions in $\A$\@.
					
					\item In this case, the condition $\zeta(p)$ uniquely specifies $p_2(p)$\@,
					say $p_2(p)=p_2^*(v)$\@.  One can see that, if we take
					\begin{equation*}
						\A=\{p\in\pi_\D^*\D^\perp\ |\ p_1(p)=0,\ p_2(p)=p_2^*(v)\},
					\end{equation*}
					then both $\dot{p}_1$ and $\dot{p}_2$ are zero on $\A$\@.  Thus $\A$ is an
					$X_\D^{\mathrm{reg}}$-invariant affine subbundle variety.
				\end{enumerate}
			\end{enumerate}
			
			\item In this case, we take
			\begin{equation*}
				\A=\{p\in\pi_\D^*\D^\perp\ |\ v_s(p)=0,\ v_r(p)=0\}
			\end{equation*}
			and note directly from the differential equations~\eqref{eq:diskXreg} that
			$\A$ is an $X_\D^{\mathrm{reg}}$-invariant affine subbundle variety.  We
			observe that both $p_1$ and $p_2$ are constant along integral curves of
			$X_\D^{\mathrm{reg}}$ with initial conditions in $\A$\@.
		\end{enumerate}
	\end{enumerate}
\end{paragr}

\begin{paragr}
	We can also find the largest $X_\D^{\mathrm{sing}}$-invariant subbundle of
	$\pi_\D^*\D^\perp$ contained in $\ker(\hat{F}_\D^*)$\@, as required by
	Theorem~\ref{the:sing} to determine the singular constrained variational
	trajectories that are also nonholonomic trajectories.  We have
	\begin{multline*}
		X_\D^{\mathrm{sing}}=
		\frac{R\cos(\theta)v_r}{\sqrt{J_r+mR^2}}\partial_x+
		\frac{R\sin(\theta)v_r}{\sqrt{J_r+mR^2}}\partial_y+
		\frac{v_s}{\sqrt{J_s}}\partial_\theta+
		\frac{v_r}{\sqrt{J_r+mR^2}}\partial_\phi
		+\frac{mgR\sqrt{J_s}\sin(\tau)\cos(\theta)}
		{\sqrt{J_s}\sqrt{J_r+mR^2}}\partial_{v_r}\\
		+\frac{\sqrt{J_r}v_sp_2}{\sqrt{J_s}\sqrt{J_r+mR^2}}\partial_{p_1}
		-\frac{\sqrt{J_r}v_sp_1}{\sqrt{J_s}\sqrt{J_r+mR^2}}\partial_{p_2},
	\end{multline*}
	and the associated differential equations are
	\begin{equation}\label{eq:diskXsing}
		\begin{aligned}
			\dot{x}(t)=&\;\frac{R\cos(\theta(t))v_r(t)}{\sqrt{J_r+mR^2}},\\
			\dot{y}(t)=&\;\frac{R\sin(\theta(t))v_r(t)}{\sqrt{J_r+mR^2}},\\
			\dot{\theta}(t)=&\;\frac{v_s(t)}{\sqrt{J_s}},\\
			\dot{\phi}(t)=&\;\frac{v_r(t)}{\sqrt{J_r+mR^2}},\\
			\dot{v}_s(t)=&\;0,\\
			\dot{v}_r(t)=&\;\frac{mgR\sqrt{J_s}\sin(\tau)\cos(\theta(t))}
			{\sqrt{J_s}\sqrt{J_r+mR^2}},\\
			\dot{p}_1(t)=&\;\frac{\sqrt{J_r}v_s(t)p_2(t)}{\sqrt{J_s}\sqrt{J_r+mR^2}},\\
			\dot{p}_2(t)=&\;-\frac{\sqrt{J_r}v_s(t)p_1(t)}{\sqrt{J_s}\sqrt{J_r+mR^2}}.
		\end{aligned}
	\end{equation}
	
	One computes
	\begin{equation*}
		\scrL_{X_\D^{\mathrm{sing}}}\lambda^{\mathrm{e}}=
		\frac{2mgR\sin(\tau)\cos(\theta)}{\sqrt{J_r+mR^2}}v_rp_1
		+(v_s^2+v_r^2)\frac{\sqrt{J_r}}{\sqrt{J_s}\sqrt{J_r+mR^2}}v_sp_2.
	\end{equation*}
	We follow the same strategy as in the preceding paragraph to determine the
	cogeneralized subbundles that are invariant under $X_\D^{\mathrm{sing}}$\@.
	\begin{enumerate}[S1.,nosep]
		\item We first take $\sin(\tau)\not=0$\@.  If
		$p\in\A_0(X_\D^{\mathrm{sing}},\lambda^{\mathrm{e}})$\@, then either
		(a)~$p_1(p)=0$ or (b)~$v_s(p)=v_r(p)=0$\@.  We consider these possibilities
		in turn.
		\begin{enumerate}[(a),nosep]
			
			\item If $p_1(p)=0$\@, then
			$p\in\A_1(X_\D^{\mathrm{sing}},\lambda^{\mathrm{e}})$ if and only if
			\begin{equation*}
				v_s(p)p_2(p)(v_s(p)^2+v_r(p)^2)=0,
			\end{equation*}
			and this holds if and only if either (i)~$v_s(p)=0$ or (ii)~$p_2(p)=0$\@.  We
			consider these possibilities in turn.
			
			\begin{enumerate}[(i),nosep]
				\item In this case we see that
				\begin{equation*}
					\A=\{p\in\pi_\D^*\D^\perp\ |\ p_1(p)=0,\ v_s(p)=0\}
				\end{equation*}
				is an $X_\D^{\mathrm{sing}}$-invariant subbundle and the value of $p_2$ is
				constant along integral curves of $X_\D^{\mathrm{sing}}$ with initial
				conditions in $\A$\@.
				
				\item In this case, if we take
				\begin{equation*}
					\A=\{p\in\pi_\D^*\D^\perp\ |\ p_1(p)=0,\ p_2(p)=0\},
				\end{equation*}
				then $\A$ is an $X_\D^{\mathrm{sing}}$-invariant affine subbundle variety,
				and the values of $p_1$ and $p_2$ are zero along integral curves of
				$X_\D^{\mathrm{sing}}$ with initial conditions in $\A$\@.
			\end{enumerate}
			
			\item Let us denote
			\begin{equation*}
				\A'_1(X_\D^{\mathrm{sing}},\lambda^{\mathrm{e}})=
				\{p\in\pi_\D^*\D^\perp\ |\ v_s(p)=0,\ v_r(p)=0\}.
			\end{equation*}
			For $p\in\A'_1(X_\D^{\mathrm{sing}},\lambda^{\mathrm{e}})$ we compute
			\begin{equation*}
				\scrL^2_{X_\D^{\mathrm{sing}}}\lambda^{\mathrm{e}}(p)=
				\frac{2m^2g^2R^2\sin(\tau)^2}{J_r+mR^2}\cos(\theta(p))^2p_1(p).
			\end{equation*}
			Setting aside the condition that $p_1(p)=0$ which has already been
			considered, we see that for the expression on the right to vanish we must
			have $\cos(\theta(p))=0$\@.  One can then see that, if we define
			\begin{equation*}
				\A=\{p\in\pi_\D^*\D^\perp\ |\ v_s(p)=0,\ v_r(p)=0,\ \cos(\theta(p))=0\},
			\end{equation*}
			then $\A$ is an $X_\D^{\mathrm{sing}}$-invariant affine subbundle variety.
			We can additionally see that the values of $p_1$ and $p_2$ are constant along
			integral curves of $X_\D^{\mathrm{sing}}$ with initial conditions in $\A$\@.
		\end{enumerate}
		
		\item Now we consider the case of $\sin(\tau)=0$\@.  As above, for
		$p\in\A_0(X_\D^{\mathrm{sing}},\lambda^{\mathrm{e}})$\@, we have the two cases
		(a)~$p_1(p)=0$ or (b)~$v_s(p)=v_r(p)=0$\@, which we consider in turn.
		
		\begin{enumerate}[(a),nosep]
			\item If $p_1(p)=0$\@, then
			$p\in\A_0(X_\D^{\mathrm{sing}},\lambda^{\mathrm{e}})$ if and only if either
			(i)~$v_s(p)=0$ or (ii)~$p_2(p)=0$\@.  We consider the two preceding cases.
			
			\begin{enumerate}[(i),nosep]
				\item Here we define
				\begin{equation*}
					\A=\{p\in\pi_\D^*\D^\perp\ |\ p_1(p)=0,\ v_s(p)=0\},
				\end{equation*}
				and note that $\A$ is an $X_\D^{\mathrm{sing}}$-invariant affine subbundle
				variety.  The value of $p_2$ is constant along integral curves of
				$X_\D^{\mathrm{sing}}$ with initial conditions in $\A$\@.
				
				\item If we define
				\begin{equation*}
					\A=\{p\in\pi_\D^*\D^\perp\ |\ p_1(p)=0,\ p_2(p)=0\},
				\end{equation*}
				then $\A$ is an $X_\D^{\mathrm{sing}}$-invariant affine subbundle variety,
				and the values of $p_1$ and $p_2$ are zero along integral curves of
				$X_\D^{\mathrm{sing}}$ with initial conditions in $\A$\@.
			\end{enumerate}
			
			\item Finally, if we take
			\begin{equation*}
				\A=\{p\in\pi_\D^*\D^\perp\ |\ v_s(p)=0,\ v_r(p)=0\},
			\end{equation*}
			then $\A$ is an $X_\D^{\mathrm{sing}}$-invariant affine subbundle variety,
			and the values of $p_1$ and $p_2$ are constant along integral curves of
			$X_\D^{\mathrm{sing}}$ with initial conditions in $\A$\@.
		\end{enumerate}
	\end{enumerate}
\end{paragr}

\begin{paragr}
	Let us now assemble the detailed analysis of the preceding two paragraphs
	into final results.  We first consider the case where nonholonomic
	trajectories are also regular constrained variational trajectories.
	\begin{proposition}
		\begin{enumerate}[(i),nosep]
			\item When\/ $\sin(\tau)\not=0$\@, the following initial conditions give all
			nonholonomic trajectories that are regular constrained variational
			trajectories for suitable choices of\/ $p_1(0)$ and\/ $p_2(0)$\@:
			\begin{enumerate}[(a),nosep]
				\item $\sin(\theta(0))=0$\@,\/ $v_s(0)=0$\@, $v_r(0)=0$\@;
				\item $\cos(\theta(0))=0$\@,\/ $v_s(0)=0$\@,\/ $v_r(0)=0$\@;
				\item $\sin(\theta(0))=0$\@,\/ $v_s(0)=0$\@.
			\end{enumerate}
			\item When\/ $\sin(\tau)=0$\@, all initial conditions give nonholonomic
			trajectories that are regular constrained variational trajectories for
			suitable choices of\/ $p_1(0)$ and\/ $p_2(0)$\@.
		\end{enumerate}
	\end{proposition}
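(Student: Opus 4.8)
The plan is to treat this proposition as a consolidation of the case analysis already carried out in the two preceding paragraphs, so that what remains is mostly bookkeeping together with one invocation of Theorem~\ref{the:reg}. First I would set up the translation: combining the observations following~\eqref{eq:affsys} with Theorem~\ref{the:reg}, a nonholonomic trajectory $\gamma$ with $\gamma'(0)=v\in\D$ is a $\D$-regular constrained variational trajectory exactly when there is some adjoint initial value $\hat\lambda(0)=(v,\lambda(0))\in\ker(\hat{F}_\D^*)_v$ whose $X_\D^{\mathrm{reg}}$-integral curve stays in $\ker(\hat{F}_\D^*)$; equivalently, $v$ lies in the base variety $\sS(\A)$ of some $X_\D^{\mathrm{reg}}$-invariant affine subbundle variety $\A\subseteq\ker(\hat{F}_\D^*)$, and the admissible values of $(p_1(0),p_2(0))$ over such a $v$ are precisely the fibre $\A\cap(\pi_\D^*\D^\perp)_v$. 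To be entitled to Theorem~\ref{the:reg} here I would work in the real-analytic category (all of the data are real analytic) and observe from~\eqref{eq:diskXreg} that $X_\D^{\mathrm{nh}}$ is complete: the $v_s$- and $\theta$-equations integrate explicitly, and the remaining components then grow at most polynomially along any integral curve. This is hypothesis~(a) of Theorem~\ref{the:reg}, so the fact that $\ker(\hat{F}_\D^*)=(\lambda^{\mathrm{e}})^{-1}(0)$ is not a subbundle causes no trouble.

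Next I would appeal to the enumeration already in hand. Following the strategy of Proposition~\ref{prop:compute-invbun} with the single generator $\lambda^{\mathrm{e}}$, the analysis of cases R1 and R2 exhausts the ways a point $p\in\pi_\D^*\D^\perp$ can lie in every one of the nested zero sets $\A_k(X_\D^{\mathrm{reg}},\lambda^{\mathrm{e}})=\bigcap_{j=0}^k(\scrL^j_{X_\D^{\mathrm{reg}}}\lambda^{\mathrm{e}})^{-1}(0)$: at level $0$ one branches on $p_1=0$ versus $v_s=v_r=0$, and at each subsequent level the surviving alternatives are split off and each maximal one is checked directly against~\eqref{eq:diskXreg} to be a genuine $X_\D^{\mathrm{reg}}$-invariant affine subbundle variety; the split on $\sin(\tau)=0$ versus $\sin(\tau)\neq0$ is carried along throughout. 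Consequently every maximal $X_\D^{\mathrm{reg}}$-invariant affine subbundle variety contained in $\ker(\hat{F}_\D^*)$ is among those produced there, and it only remains to collect their base varieties.

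Finally I would read off the answer. For $\sin(\tau)\neq0$ the invariant varieties found in case R1 have base varieties $\{v_s=0,\ v_r=0,\ \sin\theta=0\}$, $\{v_s=0,\ v_r=0,\ \cos\theta=0\}$, and $\{v_s=0,\ \sin\theta=0\}$, whose union is described term by term by conditions~(a), (b), and~(c) of the statement; the suitable choices of $(p_1(0),p_2(0))$ over each $v$ are the corresponding fibres recorded in those subcases, namely $p_1(0)=0$ with $p_2(0)$ free in~(a) and~(c) and the entire fibre in~(b). For $\sin(\tau)=0$ the invariant varieties found in case R2 have base varieties $\{v_s=0\}$ and $\{v_s\neq0\}$ — the latter realised by the affine subbundle variety $\{p_1=0,\ p_2=p_2^*(v)\}$ — and these two cover $\D$, so every nonholonomic trajectory is a regular constrained variational trajectory for a suitable adjoint initial condition, which is statement~(ii). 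I expect the only real obstacle to be the one already met in the preceding paragraphs, namely the jump in the rank of $\ker(\hat{F}_\D^*)$ along the singular locus $v_s=v_r=0$, which is why the case tree, rather than a direct application of Proposition~\ref{prop:compute-invbun}, is needed; once that tree has been traversed, this proposition follows by taking unions of base varieties.
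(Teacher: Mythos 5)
Your proposal is correct and follows essentially the same route as the paper: the proposition is obtained by assembling the case tree R1/R2 for the iterated Lie derivatives of $\lambda^{\mathrm{e}}$, reading off the base varieties of the resulting $X_\D^{\mathrm{reg}}$-invariant affine subbundle varieties, and invoking Theorem~\ref{the:reg} (under hypothesis~(a), since $\ker(\hat{F}_\D^*)$ is not a subbundle). Your explicit check of completeness of $X_\D^{\mathrm{nh}}$ is a welcome addition that the paper leaves implicit; the only tiny inaccuracy is that the base variety of $\{p_1=0,\ p_2=p_2^*(v)\}$ in the $\sin(\tau)=0$ case is in fact all of $\D$ rather than $\{v_s\neq0\}$, which only strengthens the covering argument for statement~(ii).
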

	
	Next we consider the case where nonholonomic trajectories are also singular
	constrained variational trajectories.
	\begin{proposition}
		\begin{enumerate}[(i),nosep]
			\item When\/ $\sin(\tau)\not=0$\@, the following initial conditions give all
			nonholonomic trajectories that are singular constrained variational
			trajectories for suitable choices of\/ $p_1(0)$ and\/ $p_2(0)$\@:
			\begin{enumerate}[(a),nosep]
				\item $v_s(0)=0$\@;
				\item $\cos(\theta(0))=0$\@,\/ $v_s(0)=0$\@,\/ $v_r(0)=0$\@.
			\end{enumerate}
			\item When\/ $\sin(\tau)=0$\@, the following initial conditions give all
			nonholonomic trajectories that are singular constrained variational
			trajectories for suitable choices of\/ $p_1(0)$ and\/ $p_2(0)$\@:
			\begin{enumerate}[(a),nosep]
				\item $v_s(0)=0$\@.
			\end{enumerate}
		\end{enumerate}
	\end{proposition}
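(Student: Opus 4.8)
The plan is to deduce the proposition from Theorem~\ref{the:sing} together with the classification of $X_\D^{\mathrm{sing}}$-invariant affine subbundle varieties inside $\ker(\hat{F}_\D^*)$ carried out in the preceding paragraph. First I would check the hypotheses of Theorem~\ref{the:sing}: everything in sight is real analytic, and $\ker(\hat{F}_\D^*)$ is \emph{not} a subbundle, since its rank jumps at the zero section of $\D$, so we are in case~(a) and it remains to see that $X_\D^{\mathrm{nh}}$ is complete. This is read straight off the explicit formula for $X_\D^{\mathrm{nh}}$: along an integral curve $v_s$ is constant, hence $\theta$ is affine in $t$, hence $\dot{v}_r$ is bounded, and then $v_r$, $\phi$, $x$, $y$ grow at most polynomially in $t$, so no integral curve escapes in finite time.

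Next I would restate the problem using the observations recorded just before Theorems~\ref{the:reg} and~\ref{the:sing}: a nonholonomic trajectory $\gamma$ with $\Upsilon=\gamma'$ and $\Upsilon(0)=v_0\in\D$ is a $\D$-singular constrained variational trajectory exactly when there is an integral curve $\hat{\lambda}$ of $X_\D^{\mathrm{sing}}$ over $\Upsilon$ with $\hat{\lambda}(0)=(v_0,\lambda_0)$, $\lambda_0\neq 0$, and $\hat{\lambda}(t)\in\ker(\hat{F}_\D^*)_{\Upsilon(t)}$ for all $t$; equivalently, when $v_0$ lies in the base variety of an $X_\D^{\mathrm{sing}}$-invariant affine subbundle variety contained in $\ker(\hat{F}_\D^*)$ whose fibre over $v_0$ contains a nonzero vector. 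The key input is then the case analysis of the preceding paragraph, which, following the strategy of Proposition~\ref{prop:compute-invbun} applied to the section $\lambda=(v_s^2+v_r^2)\pi_\D^*X_3^*$ (for which $\ker(\hat{F}_\D^*)=(\lambda^{\mathrm{e}})^{-1}(0)$) together with a direct inspection of the equations~\eqref{eq:diskXsing}, exhibits all maximal such invariant varieties. I would discard the zero section $\{p_1=p_2=0\}$, whose fibres are zero and which therefore supports no nowhere zero adjoint field; the survivors, with their base varieties, are: when $\sin\tau\neq 0$, the subbundle $\{p_1=0,\ v_s=0\}$ over $\{v_s=0\}$ and the affine subbundle variety $\{v_s=0,\ v_r=0,\ \cos\theta=0\}$ over $\{v_s=0,\ v_r=0,\ \cos\theta=0\}$; when $\sin\tau=0$, the subbundle $\{p_1=0,\ v_s=0\}$ over $\{v_s=0\}$ and the affine subbundle variety $\{v_s=0,\ v_r=0\}$ over $\{v_s=0,\ v_r=0\}$.

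Finally I would read the proposition off these base varieties. When $\sin\tau\neq 0$, items (i)(a) and (i)(b) are precisely the base varieties just listed, and for $v_0$ in either of them one obtains a $\D$-singular constrained variational trajectory by taking $(p_1(0),p_2(0))$ to be a nonzero element of the corresponding fibre: namely $p_1(0)=0$ with $p_2(0)\neq 0$ when only $v_s(0)=0$ is assumed (fibre $\{p_1=0\}$), and any $(p_1(0),p_2(0))\neq(0,0)$ when additionally $v_r(0)=0$ and $\cos\theta(0)=0$ (fibre the full $(\pi_\D^*\D^\perp)_{v_0}$); this last freedom is the reason (i)(b) is recorded separately even though it is contained in (i)(a). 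Since $X_\D^{\mathrm{sing}}$ is linear, its flow acts linearly on the fibres and preserves the invariant variety, so $\hat{\lambda}$ stays in $\ker(\hat{F}_\D^*)$ and $\lambda$ stays nowhere zero, whence $\gamma$ and $\lambda$ satisfy~\eqref{eq:dsing}; that no other $v_0$ can work is exactly the exhaustiveness of the preceding case analysis. The case $\sin\tau=0$ is treated identically, on noting that $\{v_s=0,\ v_r=0\}\subseteq\{v_s=0\}$, so that item (ii)(a) alone records all admissible initial conditions.

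The step I expect to be the main obstacle is confirming that the case analysis of the preceding paragraph really is exhaustive: this rests on the branching on the zero sets of $\lambda^{\mathrm{e}}$ and of the $\scrL^k_{X_\D^{\mathrm{sing}}}\lambda^{\mathrm{e}}$ being complete, and on the symbolic verification that each listed set is $X_\D^{\mathrm{sing}}$-invariant while the remaining branches, such as the one that would force both $\sin\theta=0$ and $\cos\theta=0$, contain no invariant affine subbundle variety. Once that classification is in hand the rest is routine; the only conceptual point is replacing the requirement ``$\lambda$ nowhere zero'' in~\eqref{eq:dsing} by the condition that the fibre contain a nonzero vector, which is exactly what rules out the zero section, after which it is only bookkeeping of base varieties.
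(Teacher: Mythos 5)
Your proposal is correct and takes essentially the same route as the paper: the proposition is read off from the classification of $X_\D^{\mathrm{sing}}$-invariant (affine) subbundle varieties contained in $\ker(\hat{F}_\D^*)$ in the preceding paragraph (via the observation that singular constrained variational trajectories correspond to nowhere-zero integral curves of $X_\D^{\mathrm{sing}}$ lying in $\ker(\hat{F}_\D^*)$, which is why the zero section $\{p_1=p_2=0\}$ is discarded), with the base varieties and admissible fibres recorded exactly as you describe, including the containment that makes (i)(b) a special case of (i)(a) with larger fibre. The only trivial slip is that the branch forcing both $\sin\theta=0$ and $\cos\theta=0$ arises in the regular analysis, not the singular one, but this plays no role in your argument.
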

	
	These results improve the analysis in the literature for this example in
	various ways.  For example, our analysis includes the singular case for the
	first time, even in the oft-studied case when
	$\sin(\tau)=0$~\autocite[Example~6.6]{JC/MdL/DMdD/SM:02}\@.  We also are able
	to carry out a more detailed analysis in the regular case when
	$\sin(\tau)\not=0$ than is carried out in~\autocite{NAL:22}\@.  For example,
	in \autocite{NAL:22} it is indicated that (here we paraphrase to convert to
	our terminology), ``except for the trivial case $v_s(t)=0$\@, for which the
	constraints are actually holonomic, regular constrained variational
	trajectories are never nonholonomic trajectories.''  As we have seen in our
	analysis, this ``trivial case'' is actually not quite trivial since it arises
	at singularities of the cogeneralized subbundle $\ker(\hat{F}_\D^*)$\@.
	These singularities preclude most nonholonomic trajectories with $v_s(t)=0$
	from being regular constrained variational trajectories.  It does, however,
	permit such nonholonomic trajectories to be \emph{singular} constrained
	variational trajectories.
\end{paragr}

\printbibliography
{\small\par\noindent
\textsc{Andrew D. Lewis\\
Professor at the Department of Mathematics and Statistics,\\
Queen's University at Kingston.\\
\emph{Email address:}} \texttt{andrew.lewis@queensu.ca}
\medskip\par\noindent
\textsc{Ahmed G. Shaltut\\
Graduate Student at the Department of Mathematics and Statistics,\\
Queen's University at Kingston.\\
\emph{Email address:}} \texttt{ahmed.shaltut@queensu.ca}}

\end{document}